\newcommand{\1}{P_1}
\newcommand{\2}{P_2}
\newcommand{\F}{\mathcal{F}}
\newcommand{\Sep}{\mathcal{S}}
\spnewtheorem{observation}[theorem]{Observation}{\bfseries}{\itshape}
\spnewtheorem*{ramseysTheorem}{Ramsey's Theorem}{\bf}{\it}
\newcommand{\repeattheorem}[1]{%
  \begingroup
  \renewcommand{\thetheorem}{\ref{#1}}%
  \expandafter\expandafter\expandafter\theorem
  \csname reptheorem@#1\endcsname
  \endtheorem
  \endgroup
}
\xdef\csname reptheorem@#1\endcsname{%
    \unexpanded\expandafter{\BODY}%
  }%
\unskip\label{#1}\endtheorem
\newcommand{\repeatlemma}[1]{%
  \begingroup
  \renewcommand{\thelemma}{\ref{#1}}%
  \expandafter\expandafter\expandafter\lemma
  \csname replemma@#1\endcsname
  \endtheorem
  \endgroup
}
\xdef\csname replemma@#1\endcsname{%
    \unexpanded\expandafter{\BODY}%
  }%
\unskip\label{#1}\endlemma
\newcommand{\repeatcor}[1]{%
  \begingroup
  \renewcommand{\thecorollary}{\ref{#1}}%
  \expandafter\expandafter\expandafter\corollary
  \csname repcorollary@#1\endcsname
  \endcorollary
  \endgroup
}
\xdef\csname repcorollary@#1\endcsname{%
    \unexpanded\expandafter{\BODY}%
  }%
\unskip\label{#1}\endcorollary
\newcommand{\repeatprop}[1]{%
  \begingroup
  \renewcommand{\theproposition}{\ref{#1}}%
  \expandafter\expandafter\expandafter\proposition
  \csname repproposition@#1\endcsname
  \endproposition
  \endgroup
}
\xdef\csname repproposition@#1\endcsname{%
    \unexpanded\expandafter{\BODY}%
  }%
\unskip\label{#1}\endproposition
\newcommand{\repeatobs}[1]{%
  \begingroup
  \renewcommand{\theobservation}{\ref{#1}}%
  \expandafter\expandafter\expandafter\observation
  \csname repobservation@#1\endcsname
  \endobservation
  \endgroup
}
\xdef\csname repobservation@#1\endcsname{%
    \unexpanded\expandafter{\BODY}%
  }%
\unskip\label{#1}\endobservation
\begin{document}
\title{Minimal separators in graph classes\\ defined by small forbidden induced subgraphs\thanks{This work is supported in part by the Slovenian Research Agency (I0-0035, research program P1-0285, research projects N1-0032, J1-7051, J1-9110, and a Young Researchers grant). Part of the work was done while the first named author was visiting Osaka Prefecture University in Japan, under the operation Mobility of Slovene higher education teachers 2018--2021, co-financed by the Republic of Slovenia and the European Union under the European Social Fund.}}
\titlerunning{Minimal separators in hereditary graph classes}
%
\author{Martin Milani\v c\inst{1,2}\and
Nevena Piva\v c\inst{1}}
\authorrunning{M. Milani\v c, N. Piva\v c}
%
\institute{University of Primorska, IAM, Muzejski trg 2, SI-6000 Koper, Slovenia\\
\email{martin.milanic@upr.si}, \email{nevena.pivac@iam.upr.si}\\
\and
University of Primorska, FAMNIT, Glagolja\v ska 8, SI-6000 Koper, Slovenia\\
}

%
\maketitle 
\begin{sloppypar}
Minimal separators in graphs are an important concept in algorithmic graph theory. In particular, many problems that are NP-hard for general graphs are known to become polynomial-time solvable for classes of graphs with a polynomially bounded number of minimal separators. Several well-known graph classes have this property, including chordal graphs, permutation graphs, circular-arc graphs, and circle graphs. We perform a systematic study of the question which classes of graphs defined by small forbidden induced subgraphs have a polynomially bounded number of minimal separators. We focus on sets of forbidden induced subgraphs with at most four vertices and obtain an almost complete dichotomy, leaving open only two cases.
\end{sloppypar}
\section{Introduction} \label{sec:introduction}

\begin{sloppypar}
The main concept studied in this paper is that of a minimal separator in a graph. Given a graph $G$, a \emph{minimal separator} in $G$ is a subset of vertices that separates some non-adjacent vertex pair $a,b$ and is inclusion-minimal with respect to this property (separation of $a$ and $b$).
Minimal separators in graphs are important for reliability analysis of networks~\cite{MR592113}, for sparse matrix computations, via their connection with minimal triangulations (see~\cite{MR2204109} for a survey), and are related to other combinatorial graph concepts such as potential maximal cliques~\cite{MR1857397}. Many graph algorithms are based on minimal separators, see, e.g.,~\cite{MR1361391,MR1478250,MR1451636,MR1658776,MR1632642,MR1723686,MR1857397,MR1978261,MR2473933,MR2583286,MR2839718,CM2019}.
\end{sloppypar}

\begin{sloppypar}
In this work we focus on graphs with ``few'' minimal separators. Such graphs enjoy good algorithmic properties. Many problems that are NP-hard for general graphs become polynomial-time solvable for classes of graphs with a polynomially bounded number of minimal separators. This includes 
{\sc Treewidth} and {\sc Minimum Fill-In}~\cite{MR1896345}, {\sc Maximum Independent Set} and {\sc Feedback Vertex Set}~\cite{MR2853937}, {\sc Distance-$d$ Independent Set} for even $d$~\cite{MR3593955} and many other problems~\cite{MR3311877}. It is therefore important to identify classes of graphs with a polynomially bounded number of minimal separators.
Many known graphs classes have this property, including chordal graphs~\cite{MR0408312}, chordal bipartite graphs~\cite{Kratsch}, weakly chordal graphs~\cite{MR1857397}, permutation graphs~\cite{MR1312164,MR1361391}, circular-arc graphs~\cite{Kratsch,MR1723686,MR1632642}, circle graphs~\cite{kloks1996treewidth,Kratsch,MR1632642}, etc. Moreover, a class of graphs has a polynomially bounded number of minimal separators if and only if it has a polynomially bounded number of potential maximal cliques~\cite{MR1896345}.
\end{sloppypar}

We perform a systematic study of the question which classes of graphs defined by small forbidden induced subgraphs have a polynomially bounded number of minimal separators. We focus on sets of forbidden induced subgraphs with at most four vertices and obtain an almost complete dichotomy, leaving open only two cases, the class of graphs of independence number three that are either \mbox{$C_4$-free} of $\{\textrm{claw}, C_4\}$-free. Our approach combines a variety of tools and techniques, including constructions of graph families with exponentially many minimal separators, applications of Ramsey's theorem, study of the behavior of minimal separators under various graph operations, and structural characterizations of graphs in hereditary classes.

\medskip
\noindent\textbf{Statement of the main result.} Given two non-adjacent vertices $a$ and $b$ in a graph $G$, a set $S\subseteq V(G)\setminus \{a,b\}$ is an \emph{$a,b$-separator} if $a$ and $b$ are contained in different components of $G-S$. If $S$ contains no other $a,b$-separator as a proper subset, then $S$ is a \emph{minimal $a,b$-separator}. We denote by $\Sep_G(a,b)$ the set of all minimal $a,b$-separators. A \emph{minimal separator} in $G$ is a set $S\subseteq V(G)$ that is a minimal $a,b$-separator for some pair of non-adjacent vertices $a$ and $b$. We denote by $\Sep_G$ the set of all minimal separators in $G$ and by $s(G)$ the cardinality of $\Sep_G$. The main concept of study in this paper is the following property of graph classes.

\begin{definition}\label{def:poly}
We say that a graph class $\mathcal{G}$ is \emph{tame} if
there exists a polynomial $p:\mathbb{R}\to\mathbb{R}$ such that for every graph $G\in \mathcal{G}$, we have $s(G)\le p(|V(G)|)$.
\end{definition}

Given a family $\F$ of graphs, we say that a graph $G$ is \emph{$\F$-free} if no induced subgraph of $G$ is isomorphic to a member of $\F$. Given two families $\F$ and $\F'$ of graphs, we write $\F\trianglelefteq \F'$
if the class of $\F$-free graphs is contained in the class of $\F'$-free graphs, or, equivalently, if every $\F$-free graph is also $\F'$-free.

\begin{observation}\label{obs:trivial}
Let $\F$ and $\F'$ be two graph families such that $\F\trianglelefteq \F'$.
If the class of $\F'$-free graphs is tame, then the class of $\F$-free graphs is tame.
\end{observation}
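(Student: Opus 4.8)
The plan is to observe that tameness is a property inherited by subclasses, and that the hypothesis $\F\trianglelefteq\F'$ says precisely that the class of $\F$-free graphs is a subclass of the class of $\F'$-free graphs. Concretely, I would start from the assumption that the class of $\F'$-free graphs is tame and let $p\colon\mathbb{R}\to\mathbb{R}$ be a polynomial as guaranteed by Definition~\ref{def:poly}, so that $s(G)\le p(|V(G)|)$ for every $\F'$-free graph $G$.

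Next I would take an arbitrary graph $G$ in the class of $\F$-free graphs and invoke the definition of $\F\trianglelefteq\F'$: every $\F$-free graph is $\F'$-free, hence $G$ is $\F'$-free. Applying the bound from the previous paragraph to $G$ yields $s(G)\le p(|V(G)|)$. Since $G$ was an arbitrary $\F$-free graph and the same polynomial $p$ works for all of them, the class of $\F$-free graphs satisfies Definition~\ref{def:poly} with the witness $p$, i.e.\ it is tame.

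There is no real obstacle here: the statement is an immediate consequence of unwinding the definitions of $\trianglelefteq$ and of tameness, and the single polynomial $p$ transfers verbatim from the larger class to the smaller one. The only thing to be careful about is to phrase the argument purely in terms of graph membership (every $\F$-free graph is $\F'$-free) rather than in terms of the forbidden families $\F$ and $\F'$ themselves, since the relation $\trianglelefteq$ is defined at the level of the graph classes and need not correspond to any containment between $\F$ and $\F'$.
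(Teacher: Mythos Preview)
Your proposal is correct and matches the paper's treatment: the paper states this as an observation without proof, since it follows immediately from the definitions of $\trianglelefteq$ and tameness exactly as you describe. There is nothing to add.
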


It is well known and not difficult to see that relation $\F\trianglelefteq \F'$ can be checked by means of the following criterion, which becomes particularly simple for finite families $\F$ and $\F'$.

\begin{observation}[Folklore]
For every two graph families $\F$ and $\F'$, we have $\F\trianglelefteq \F'$
if and only if every graph from $\F'$
contains an induced subgraph isomorphic to a member of $\F$.
\end{observation}

Our main result is Theorem~\ref{thm:main}. It deals with graph classes defined by sets of forbidden induced subgraphs having at most four vertices. The relevant graphs are named as in Fig.~\ref{fig:allgraphs}.

\begin{figure}[H]
\centering
\includegraphics[width=\linewidth]{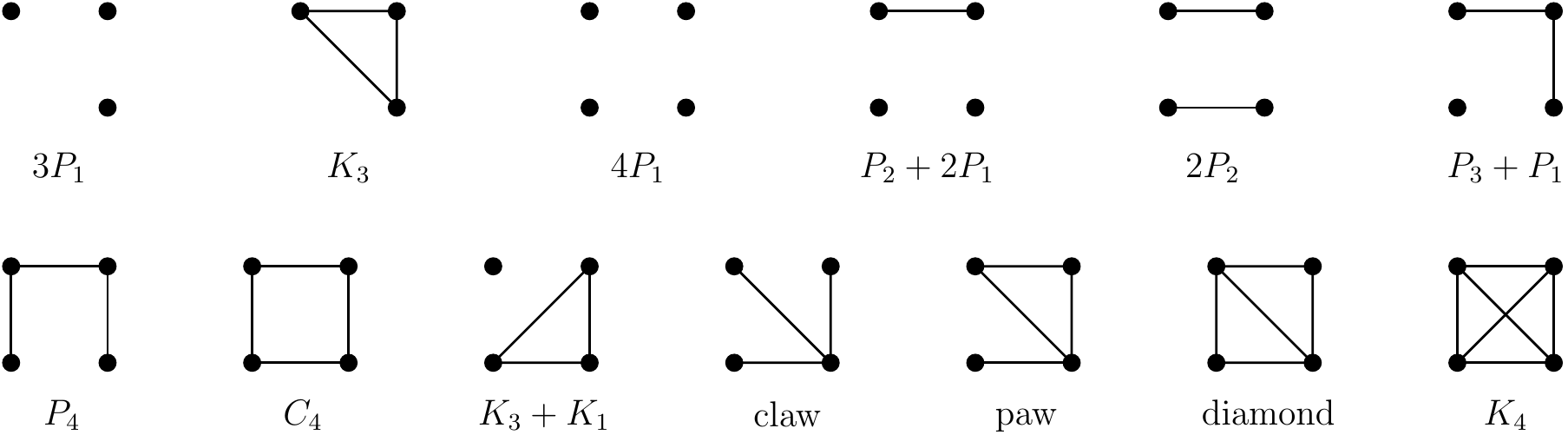}
\caption{Graphs on at most $4$ vertices appearing in the statement of the main theorem.}
\label{fig:allgraphs}
\end{figure}

\begin{reptheorem}{theorem:main}
\label{thm:main}
Let $\F$ be a family of graphs with at most four  vertices such that $\F\neq \{4\1, C_4\}$ and $\F\neq \{4\1$, claw, $C_4\}$. Then the class of $\F$-free graphs is tame if and only if $\F\trianglelefteq \F'$ for one of the following families $\F'$:
\begin{enumerate}[i)]
\item \label{item1} $\F'=\{P_4\}$ or $\F'=\{2\2\}$,
\item $\F'=\{F, \textrm{paw}\}$ for some $F\in \{4\1$, $\2+2\1$, $P_3+\1$, claw$\}$,
\item $\F'=\{F, K_3+\1\}$ for some $F\in \{4\1$, $\2+2\1$, $P_3+\1$, claw$\}$,
\item $\F'=\{F, K_4\}$ for some $F\in \{4\1$, $\2+2\1$, $P_3+\1\}$,
\item $\F'=\{F, C_4\}$ for some $F\in \{ \2+ 2\1$, $P_3+ \1\}$,
\item \label{item-last} $\F'=\{4\1, C_4$, diamond$\}$.
\end{enumerate}
\end{reptheorem}

Theorem~\ref{thm:main} can be equivalently stated in a dual form, characterizing minimal classes of $\F$-graphs that are not tame.

\begin{reptheorem}{theorem:main-dual}
\label{thm:main-dual}
Let $\F$ be a family of graphs with at most $4$ vertices such that
$\F\neq \{4\1,C_4\}$ and $\F\neq \{4\1$, $C_4$, claw$\}$. Then the class of $\F$-free graphs is not tame if and only if $\F'\trianglelefteq \F$ for one of the following families $\F'$:
\begin{enumerate}[i)]
\item \label{item1-dual}  $\F'=\{3\1$, diamond$\}$,
\item \label{item2-dual}  $\F'=\{$claw, $K_4$, $C_4$, diamond$\}$,
\item \label{item-last-dual} $\F'=\{K_3$, $C_4\}$.
\end{enumerate}
\end{reptheorem}

In Figure~\ref{fig:mainresult} we give an overview of maximal tame and minimal non-tame classes of $\mathcal{F}$-free graphs, where $\mathcal{F}$ contains graphs with at most four vertices. Maximal tame classes correspond to sets $\mathcal{F}$ of forbidden induced subgraphs depicted in green ellipses, while minimal non-tame classes correspond to sets depicted in red ellipses (in gray-scale printing, red and green ellipses appear as brighter, resp., darker ellipses). The two remaining open cases are dashed. A similar figure with respect to boundedness of the clique-width can be found in~\cite{MR2237514}.

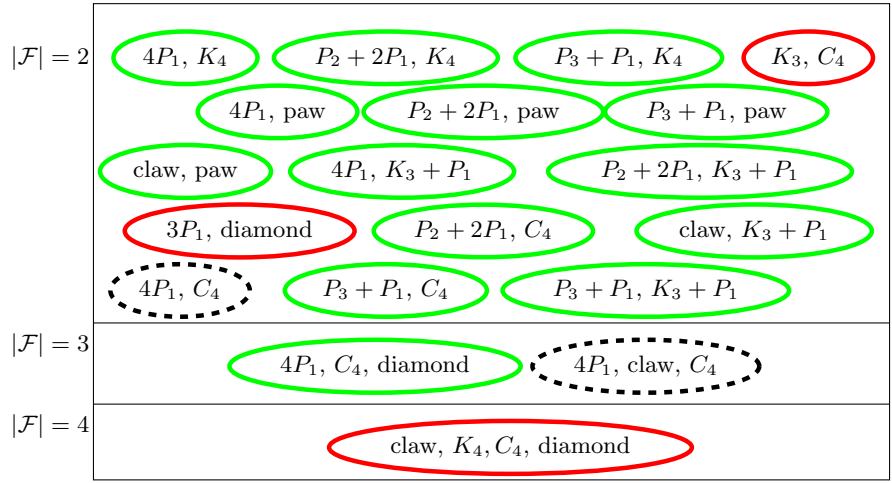
\begin{figure}
\centering
\begin{tikzpicture}[scale=0.72]
\draw (-0.7,1.1) -- (14,1.1) -- cycle;
\draw (-0.7, -0.4) -- (14,-0.4) -- cycle;
\draw (-0.7,-1.8) -- (14,-1.8) -- (14, 7) -- (-0.7,7) -- cycle;

\node at (-1.5,6) {$|{\cal F}|=2$};
\node at (-1.5,0.7) {$|{\cal F}|=3$};
\node at (-1.5,-0.8) {$|{\cal F}|=4$};

\node [ellipse, line width=0.6mm, draw=green, , minimum size=0.5cm] at (1,6) {$4\1$, $K_4$};
\node [ellipse, line width=0.6mm, draw=green, , minimum size=0.5cm] at (4.7,6) {$\2+2\1$, $K_4$};
\node [ellipse, line width=0.6mm, draw=green, , minimum size=0.5cm] at (9,6) {$P_3+\1$, $K_4$};
\node [ellipse, line width=0.6mm, draw=red, , minimum size=0.5cm] at (12.5,6) {$K_3$, $C_4$};

\node [ellipse, line width=0.6mm, draw=green, , minimum size=0.5cm] at (2.7,5) {$4\1$, paw};
\node [ellipse, line width=0.6mm, draw=green, , minimum size=0.5cm] at (6.5,5) {$\2+2\1$, paw};
\node [ellipse, line width=0.6mm, draw=green, , minimum size=0.5cm] at (10.8,5) {$P_3+\1$, paw};

\node [ellipse, line width=0.6mm, draw=green, , minimum size=0.5cm] at (1,3.9) {claw, paw};

\node [ellipse, line width=0.6mm, draw=green, , minimum size=0.5cm] at (5,3.9) {$4\1$, $K_3+\1$};
\node [ellipse, line width=0.6mm, draw=green, , minimum size=0.5cm] at (10.5,3.9) {$\2+2\1$, $K_3+\1$};
\node [ellipse, line width=0.6mm, draw=green, , minimum size=0.5cm] at (11.5,2.8) {claw, $K_3+\1$};

\node [ellipse, line width=0.6mm, draw=green, , minimum size=0.5cm] at (6.5,2.8) {$ \2+2\1$, $C_4$};
\node [ellipse, line width=0.6mm, draw=red, , minimum size=0.5cm] at (2,2.8) {$3\1$, diamond};
\node [ellipse, line width=0.6mm, draw=green, , minimum size=0.5cm] at (9.5,1.7) {$P_3+\1$, $K_3+\1$};
\node [ellipse, line width=0.6mm, draw=green, , minimum size=0.5cm] at (4.7,1.7) {$ P_3+\1$, $C_4$};
\node [ellipse, dashed, line width=0.6mm, draw=black, , minimum size=0.5cm] at (0.9,1.7) {$4\1$, $C_4$};

\node [ellipse, dashed, line width=0.6mm, draw=black, , minimum size=0.5cm] at (9.5,0.3) {$4\1$, claw, $C_4$};

\node [ellipse, line width=0.6mm, draw=green, , minimum size=0.5cm] at (4.5,0.3) {$4\1$, $C_4$, diamond};
\node [ellipse, line width=0.6mm, draw=red, , minimum size=0.5cm] at (7,-1.2) {claw, $K_4, C_4$, diamond};

\end{tikzpicture}
\caption{Overview of the main result.}
\label{fig:mainresult}
\end{figure}

\medskip
\noindent\textbf{Related work.}
To the best of the authors' knowledge, this work represents the first systematic study of the problem of classifying hereditary graph classes with respect to the existence of a polynomial bound on the number of minimal separators of the graphs in the class. Dichotomy studies for many other problems in mathematics and computer science are available in the literature
in general, as well as within the field of graph theory, for properties such as boundedness of the clique-width~\cite{MR2237514,MR3672793,MR3515312,MR3442572}, price of connectivity~\cite{MR3530629}, and polynomial-time solvability of various algorithmic problems such as {\sc Chromatic Number}~\cite{MR3575032,MR3623382,MR1905637}, {\sc Graph Homomorphism}~\cite{MR1047555}, {\sc Graph Isomorphism}~\cite{MR3712298}, and {\sc Dominating Set}~\cite{MR3515011}.

\medskip
\noindent\textbf{Structure of the paper.}
We collect the main notations, definitions, and preliminary results in Section~\ref{sec:prelim}. In Section~\ref{sec:exp-constructions} we
present several families of graphs with exponentially many minimal separators. In Section~\ref{sec:operations}, we study the effect of various graph operations on the number of minimal separators. Our main result, given by Theorems~\ref{thm:main} and~\ref{thm:main-dual}, is proved in Section~\ref{sec:proof-of-main-theorem}.

\section{Preliminaries} \label{sec:prelim}

All graphs in this paper will be finite, simple, undirected, and will have at least one vertex. A vertex $v$ in a graph $G$ is \emph{universal} if it is adjacent to every other vertex in the graph, and \emph{simplicial} if its neighborhood is a clique. Two vertices $u$ and $v$ are said to be \emph{true twins} if $N_G[u]=N_G[v]$. A graph $G$ is \emph{true-twin-free} if no two distinct vertices in $G$ are true twins. The \emph{maximum co-degree} of a graph $G$ is defined as the maximum number of non-neighbors of a vertex. A \emph{co-component} of a graph $G$ is the subgraph of $G$ induced by the vertex set of a component of~$\overline{G}$, where $\overline{G}$ denotes the complement of $G$. A graph is \emph{co-connected} if its complement is connected. Given two graphs $F$ and $G$, we write
$F\subseteq_i G$ if $F$ is an induced subgraph of $G$. A graph class $\mathcal{G}$ is \emph{hereditary} if it is closed under vertex deletion, or, equivalently, if there exists a set $\mathcal{F}$ of graphs such that $\mathcal{G}$ is exactly the class of $\mathcal{F}$-free graphs.

If $A$ and $B$ are disjoint subsets of $V(G)$, we say that they are \emph{complete} (resp., \emph{anticomplete}) \emph{to each other} in $G$ if $\{ab\mid a\in A, b\in B\} \subseteq E(G)$ (resp., $\{ab\mid a\in A, b\in B\}\cap E(G)=\emptyset$). A graph $G$ is the \emph{join} of two vertex-disjoint graphs $G_1$ and $G_2$, written $G = G_1\ast G_2$, if $V(G) = V(G_1)\cup V(G_2)$ and $E(G) = E(G_1)\cup E(G_2)\cup \{xy\mid x\in V(G_1) \, \textrm{and}\, y\in V(G_2)\}$. As usual, we denote by $P_n$, $C_n$, $K_n$ the path, the cycle, and the complete graph with $n$ vertices, respectively. For positive integers $m,n$, we denote by $K_{m,n}$ the complete bipartite graph with $m$ and $n$ vertices in the two parts of the bipartition. The \emph{claw} is the graph $K_{1,3}$, the \emph{co-claw} is the complement of the claw (that is, the graph $K_3+\1$). Given a non-negative integer $k$, the disjoint union of $k$ copies of $G$ is denoted by $kG$. The \emph{girth} of a graph $G$ is the smallest integer $k$ such that $C_k$ is an induced subgraph of $G$ (or $\infty$ if $G$ is acyclic). Given a graph $G$ and sets $A,B,W\subseteq V(G)$, we say that $W$ \emph{separates} $A$ and $B$ if $A$ and $B$ are contained in different components of $G-W$ (thus, in particular, the sets $A,B,W$ are pairwise disjoint). Given a graph $G$, its \emph{line graph} is the graph $L(G)$ with vertex set $E(G)$ in which two distinct vertices $e$ and $e'$ are adjacent if and only if $e$ and $e'$ have a common endpoint as edges in $G$. For undefined terms related to graphs and graph classes, we refer the reader to~\cite{MR1971502,MR2063679}.

An important ingredient for some of our proofs will be the following classical result~\cite{MR1576401}.

\begin{ramseysTheorem}
For every two positive integers $k$ and $\ell$, there exists a least positive
integer $R(k,\ell)$ such that every graph with at least
$R(k,\ell)$ vertices contains either a clique of size $k$ or an independent set of size $\ell$.
\end{ramseysTheorem}

Given a graph $G$ and a set $S\subseteq V(G)$, a component $C$ of the graph $G-S$ is \emph{$S$-full} if every vertex in $S$ has a neighbor in $C$, or, equivalently, if $N_G(S) = C$. The following well-known lemma characterizes minimal separators (see, e.g., \cite{MR1723686,MR2063679,MR1288579}).

\begin{lemma}
\label{lem:characterization-minimal-separators}
Given a graph $G = (V,E)$, a set $S\subseteq V$ is a minimal separator in $G$
if and only if the graph $G-S$ contains at least two $S$-full components.
\end{lemma}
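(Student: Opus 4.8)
The plan is to prove both implications directly from the definitions, exploiting the reformulation of "$S$-full" as "$N_G(S)$ meets the component in every vertex of $S$".

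For the easy direction ($\Leftarrow$), assume $G-S$ has two $S$-full components $C_1$ and $C_2$, and pick $a\in C_1$, $b\in C_2$. Then $a$ and $b$ are non-adjacent and $S$ is an $a,b$-separator, since $C_1$ and $C_2$ are distinct components of $G-S$. To establish minimality, I would show that deleting any single vertex $s\in S$ destroys the separation: because $C_1$ and $C_2$ are $S$-full, $s$ has a neighbour $u_i\in C_i$ for $i=1,2$, and concatenating an $a$–$u_1$ path inside $C_1$, the edges $u_1s$ and $su_2$, and a $u_2$–$b$ path inside $C_2$ produces an $a$–$b$ path in $G-(S\setminus\{s\})$. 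Hence $S\setminus\{s\}$ is not an $a,b$-separator for any $s\in S$, so $S$ is a minimal $a,b$-separator and therefore a minimal separator.

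For the converse ($\Rightarrow$), suppose $S$ is a minimal $a,b$-separator for some non-adjacent pair $a,b$, and let $C_a$ and $C_b$ be the components of $G-S$ containing $a$ and $b$, respectively. I claim both are $S$-full, which gives the two required $S$-full components. Arguing by contradiction, suppose some $s\in S$ has no neighbour in $C_a$; I would then show that $S\setminus\{s\}$ is still an $a,b$-separator, contradicting minimality. Indeed, any $a$–$b$ path $P$ in $G-(S\setminus\{s\})$ must meet $S$ (as $S$ separates $a$ and $b$), and the only vertex of $S$ it can use is $s$; considering the first occurrence of $s$ along $P$ read from $a$, every earlier vertex of $P$ lies in $G-S$ and, being connected to $a$ by a subpath of $P$ avoiding $S$, lies in $C_a$. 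Thus the vertex immediately preceding $s$ on $P$ is a neighbour of $s$ in $C_a$, a contradiction. By symmetry $C_b$ is $S$-full as well.

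The whole argument is elementary, and I do not expect a genuine obstacle; the only point requiring a little care is the "first occurrence of $s$" analysis in the converse, where one must verify that the prefix of $P$ up to $s$ avoids $S$ entirely and hence lies in a single component of $G-S$, which is forced to be $C_a$. The degenerate case in which $a$ is itself adjacent to $s$ is covered, since then the prefix is just the single vertex $a\in C_a$.
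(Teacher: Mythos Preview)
Your proof is correct. Both directions are argued carefully, and the ``first occurrence of $s$'' analysis in the converse is handled properly; the simpler alternative---observing that if $s$ has no neighbour in $C_a$ then the component of $a$ in $G-(S\setminus\{s\})$ is still exactly $C_a$---would also work and is perhaps slightly cleaner, but what you wrote is fine.

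As for comparison with the paper: the paper does not actually give a proof of this lemma. It states it as well known and cites standard references (e.g., Golumbic's book and papers of Kloks--Kratsch and Bodlaender--Kloks--Kratsch). So there is no in-paper argument to compare against; your write-up supplies precisely the standard proof one finds in those sources.
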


%

\begin{corollary}\label{cor:deleting-a-vertex-in-S}
Let $S$ be a minimal separator in a graph $G$. Then for every $v\in S$ the set $S\setminus\{v\}$ is a minimal separator in $G-v$.
\end{corollary}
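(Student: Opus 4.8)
The plan is to derive the statement directly from the characterization of minimal separators in Lemma~\ref{lem:characterization-minimal-separators}. First I would fix a minimal separator $S$ in $G$ and a vertex $v\in S$, and put $S' = S\setminus\{v\}$. The key bookkeeping observation is the elementary identity $(G-v)-S' = G-S$: deleting $v$ and then the remaining vertices of $S$ removes exactly the set $S$. In particular $G-v$ and $G$ have literally the same graph left after deleting their respective separators $S'$ and $S$, hence the same collection of connected components.

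Next I would invoke the ``only if'' direction of Lemma~\ref{lem:characterization-minimal-separators}: since $S$ is a minimal separator in $G$, the graph $G-S$ contains at least two $S$-full components, say $C_1$ and $C_2$. By definition of $S$-fullness, every vertex of $S$ has a neighbor in $C_1$ and a neighbor in $C_2$; a fortiori every vertex of the smaller set $S'\subseteq S$ has a neighbor in each of $C_1,C_2$. Thus $C_1$ and $C_2$ are two $S'$-full components of the graph $(G-v)-S' = G-S$. Applying the ``if'' direction of Lemma~\ref{lem:characterization-minimal-separators} to the graph $G-v$ and the set $S'$ then yields that $S'$ is a minimal separator in $G-v$, as desired. (The degenerate case $|S|=1$, where $S'=\emptyset$, is covered by the same argument: two $\emptyset$-full components of $G-v$ just means $G-v$ is disconnected.)

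I do not anticipate any genuine obstacle here; the statement is essentially a direct corollary. The only points needing a line of care are the set identity $(G-v)-S' = G-S$ and the fact that the property of being an $S$-full component is preserved (indeed only made easier to satisfy) when one passes from $S$ to a subset $S'$ — removing a vertex from the separator only weakens the requirement placed on the components.
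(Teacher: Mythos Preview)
Your proof is correct and follows essentially the same argument as the paper: both use the identity $(G-v)-S' = G-S$, invoke Lemma~\ref{lem:characterization-minimal-separators} to obtain two $S$-full components of $G-S$, observe that $S'\subseteq S$ makes them $S'$-full, and conclude via the converse direction of the lemma. The paper's version is just more terse and omits your remarks on the degenerate case.
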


\begin{proof}
Let $G'  = G-v$ and $S' = S\setminus \{v\}$. Since $S$ is a minimal separator in $G$, there exists two $S$-full components $C$ and $D$ in $G-S$. Since $G-S = G'-S'$ and $S'\subseteq S$, it follows that $C$ and $D$ are also $S'$-full components of $G'-S'$. Hence, $S'$ is a minimal separator in $G'$.
\qed\end{proof}

The following result shows that the class of $P_4$-free graphs is tame.

\begin{theorem}[Nikolopoulos and Palios~\cite{MR2204116}]
\label{thm:cographs}
If $G$ is a $P_4$-free graph, then $s(G)< 2/3|V(G)|$.
\end{theorem}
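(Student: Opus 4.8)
The plan is to use the recursive structure of $P_4$-free graphs (cographs): a cograph with at least two vertices is disconnected, or else its complement is disconnected, in which case it is the join of two or more nonempty graphs (its co-components). Both operations interact cleanly with minimal separators, so the first step is to record the corresponding formulas using Lemma~\ref{lem:characterization-minimal-separators}. If $G$ is the disjoint union of its connected components $C_1,\dots,C_k$ with $k\ge 2$, then a minimal $a,b$-separator with $a,b$ in different components must be $\emptyset$, while one with $a,b$ in the same $C_i$ is precisely a minimal $a,b$-separator of $C_i$ (the other components play no role); since each $C_i$ is connected, $\emptyset\notin\Sep_{C_i}$, and the sets $\Sep_{C_i}$ are supported on pairwise disjoint vertex sets, so $s(G)=1+\sum_{i=1}^k s(C_i)$. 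If instead $G=D_1\ast\cdots\ast D_k$ with $k\ge 2$ and each $D_i$ nonempty, then any two non-adjacent vertices of $G$ lie in a common $D_i$, and separating them forces one to delete all of $V(G)\setminus V(D_i)$ together with a minimal separator of $D_i$; conversely each such set $\big(\bigcup_{j\ne i}V(D_j)\big)\cup S'$ with $S'\in\Sep_{D_i}$ is a minimal separator of $G$ (the two full components inside $D_i$ stay full, as every deleted vertex outside $D_i$ is complete to $V(D_i)$), and distinct $i$ give distinct sets, so $s(G)=\sum_{i=1}^k s(D_i)$, with $\emptyset\notin\Sep_G$ since $G$ is connected.

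Given these formulas, I would prove the bound by simultaneous induction on $|V(G)|$, in a slightly strengthened form: (a) if $G$ is a disconnected cograph then $s(G)\le \tfrac23|V(G)|-\tfrac13$, and (b) if $G$ is a connected cograph (including the one-vertex graph) then $s(G)\le \tfrac23|V(G)|-\tfrac23$. The strengthening is needed because the bare inequality $s(G)<\tfrac23 n$ is too weak to survive a disjoint union, where the additive $+1$ has to be absorbed. The base case $n=1$ is instance (b), with $s=0=\tfrac23-\tfrac23$. For the inductive step of (a), write $G$ as the union of its components $C_1,\dots,C_k$ with $k\ge 2$ and apply (b) to each: $s(G)=1+\sum s(C_i)\le 1+\sum(\tfrac23 n_i-\tfrac23)=1+\tfrac23 n-\tfrac23 k\le\tfrac23 n-\tfrac13$, using $k\ge 2$. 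For the inductive step of (b) with $n\ge 2$, the classical fact about $P_4$-free graphs gives that $\overline G$ is disconnected, so $G=D_1\ast\cdots\ast D_k$ with $k\ge 2$; each $D_i$ is disconnected or connected, so by (a) or by (b) we get $s(D_i)\le\tfrac23 m_i-\tfrac13$ in either case, hence $s(G)=\sum s(D_i)\le\tfrac23 n-\tfrac13 k\le\tfrac23 n-\tfrac23$. Since every cograph falls under (a) or (b), in both cases $s(G)\le\tfrac23|V(G)|-\tfrac13<\tfrac23|V(G)|$, which is the claim.

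The step I expect to require the most care is the join formula and its justification through Lemma~\ref{lem:characterization-minimal-separators}: one must verify both that the candidate sets $\big(\bigcup_{j\ne i}V(D_j)\big)\cup S'$ genuinely are minimal separators of $G$ and that no others arise, so that the union over $i$ is disjoint and has exactly $\sum_i s(D_i)$ elements; everything after that is routine bookkeeping. One could equivalently package the whole argument via the canonical cotree of $G$, where the two formulas say precisely that $s(G)$ counts the union-nodes of the cotree, but the inductive phrasing avoids invoking cotree machinery. I would also note that the constant $\tfrac23$ is asymptotically best possible, as witnessed by the graphs obtained by iterating $A\mapsto (A\ast A)+(A\ast A)$ from $A=2K_1$, for which $s(G)/|V(G)|\to\tfrac23$.
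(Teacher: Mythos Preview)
The paper does not prove this theorem; it is quoted from Nikolopoulos and Palios~\cite{MR2204116} and used as a black box. So there is no in-paper proof to compare against.

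Your argument is correct. The two formulas you derive for the disjoint union and the join are exactly what the paper later records as Theorem~\ref{theorem:joinunion} and Corollaries~\ref{cor:disconnected} and~\ref{cor:co-disconnected}, so you are not reinventing anything alien to the paper's framework. The strengthened induction with the two constants $-\tfrac13$ and $-\tfrac23$ is the right fix for the additive $+1$ in the union case, and the arithmetic checks out in both steps. Your tightness example is also correct: with $n_k=2\cdot 4^k$ and $s_k=(4^{k+1}-1)/3$ one gets $s_k/n_k\to \tfrac23$. One small point of presentation: when you invoke the cograph dichotomy in step~(b), note that each co-component $D_i$ is itself a cograph (as an induced subgraph of $G$), so the induction hypothesis applies; you use this implicitly but it is worth saying.
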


\begin{sloppypar}
We will also need the following result about the structure of paw-free graphs. A graph $G$ is \emph{complete multipartite} if its vertex set can be partitioned into any number of parts such that two vertices are adjacent if and only if they belong to different parts.
\end{sloppypar}

\begin{theorem}[Olariu~\cite{MR947254}]
\label{thm:pawfree}
A connected paw-free graph $G$ is either $K_3$-free or complete multipartite.
\end{theorem}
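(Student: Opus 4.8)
The plan is to prove the non-trivial direction: if $G$ is connected, paw-free, and contains a triangle, then $G$ is complete multipartite. I will use the folklore fact that a graph is complete multipartite if and only if its complement is a disjoint union of cliques, equivalently if and only if it contains no induced $\2+\1$ (an edge plus an isolated vertex, i.e.\ $\overline{P_3}$): indeed, $\overline{G}$ being a disjoint union of cliques says precisely that non-adjacency is transitive, and the only obstruction to transitivity is an induced $\2+\1$. So it suffices to show that a connected paw-free graph containing a triangle has no induced $\2+\1$.

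The heart of the argument is a local claim: \emph{in a connected paw-free graph, every vertex has at least two neighbors in every triangle} (when the vertex lies on the triangle this reads as "its two triangle-neighbors", which is automatic). For a vertex $v$ outside a triangle $T$: if $v$ had exactly one neighbor in $T$, then $\{v\}\cup T$ would induce a paw, so $v$ has either $0$ or $\ge 2$ neighbors in $T$; to rule out $0$ I would use connectivity. Take a shortest path in $G$ from $v$ to the set of vertices having a neighbor in $T$, and let its last edge be $v'v''$, with $v''$ the endpoint having a neighbor in $T$. By minimality one checks that $v'$ and $v''$ both lie outside $T$ and that $v'$ has no neighbor in $T$, whereas $v''$, having a neighbor in $T$, has by the previous case at least two neighbors $t_1,t_2$ in $T$; then $\{v',v'',t_1,t_2\}$ induces a paw, a contradiction. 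I expect this shortest-path step to be the only real obstacle, and it is mostly a matter of careful bookkeeping to confirm $v',v''\notin T$.

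Granting the claim, suppose for contradiction that $G$ has an induced $\2+\1$ on vertices $a,b$ (adjacent) and $c$ (non-adjacent to both $a$ and $b$). First I would produce a common neighbor $d$ of $a$ and $b$: fixing a triangle $T=\{x,y,z\}$ of $G$, if $a,b\notin T$ then $|N(a)\cap T|\ge 2$ and $|N(b)\cap T|\ge 2$ by the claim, so since $|T|=3$ they share a neighbor in $T$; the sub-cases where $a$ or $b$ lies on $T$ are immediate. Then $\{a,b,d\}$ is a triangle with $c\notin\{a,b,d\}$ (since $d$ is adjacent to $a$ but $c$ is not), yet $c$ is non-adjacent to both $a$ and $b$, so $c$ has at most one neighbor in the triangle $\{a,b,d\}$ — contradicting the claim. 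Hence $G$ has no induced $\2+\1$ and is therefore complete multipartite. For completeness I would also record the trivial observations implicit in the statement: a triangle-free graph is paw-free since the paw contains a triangle, and a complete multipartite graph is paw-free since the paw contains an induced $\2+\1$ while a complete multipartite graph does not; this shows that "$K_3$-free or complete multipartite" is an exact description of connected paw-free graphs.
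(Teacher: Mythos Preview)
Your argument is correct. Note, however, that the paper does not prove this theorem itself; it is quoted from Olariu and used as a black box, so there is no in-paper proof to compare against. For the record, the shortest-path bookkeeping you flag as the only obstacle goes through cleanly: since every vertex of $T$ has neighbors in $T$, the target set $S=\{u:N(u)\cap T\neq\emptyset\}$ contains $T$; by minimality $v'\notin S$, hence $v'\notin T$, and if $v''\in T$ then $v'$ would be adjacent to a vertex of $T$, again contradicting $v'\notin S$, so $v''\notin T$ as well. The rest of your derivation (pigeonhole for a common neighbor $d$, then the contradiction with the claim applied to $c$ and the triangle $\{a,b,d\}$) is straightforward and matches the standard proof of Olariu's characterization.
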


We conclude this section with a straightforward but useful simplification of the defining property of tame graph classes.

\begin{replemma}{lemmaa:poly-monomial}
\label{lem:poly-monomial}
A graph class $\mathcal{G}$ is tame if and only if there exists a non-negative integer $k$ such that $s(G)\le |V(G)|^k$  for all $G\in \mathcal{G}$.
\end{replemma}
\begin{proof}
Sufficiency (the ``if'' direction) is trivial. To prove necessity (the ``only if'' direction), let $\mathcal{G}$ be tame graph class and let $p(x) = \sum_{i = 0}^d a_ix^i$ be a polynomial such that $s(G)\le p(|V(G)|)$ for all $G\in \mathcal{G}$. We may assume that $a_i\ge 0$ for all $i$, since otherwise we may delete the terms of $p$ with negative coefficients to obtain a polynomial $q$ such that $s(G)\le q(|V(G)|)$ for all $G\in \mathcal{G}$. Moreover, we may assume that $a_0 = \ldots = a_d$, since otherwise, as long as there exists a pair $(i,j)$ with $0\le i<j\le d$ and $a_i<a_j$, we may increase the $i$-th coefficient from $a_i$ to $a_j$ to obtain a polynomial $q$ such that $s(G)\le q(|V(G)|)$ for all $G\in \mathcal{G}$.
Let $a$ be this common value, that is, $a_0 = \ldots = a_d = a$.
We thus have $p(x) = a(\sum_{i = 0}^d x^i)$ and hence $p(n)\le a n^{d+1}$ holds for all $n\ge 2$. (Since the $1$-vertex graph has no minimal separators, we focus on $n\ge 2$.) Let $\ell$ be the least non-negative integer such that $a \le 2^\ell$. Then, for all $n\ge 2$, we have
$a n^{d+1}\le 2^\ell\cdot n^{d+1}\le n^\ell\cdot n^{d+1} = n^{d+\ell+1}$.
Taking $k = d+\ell+1$, necessity is proved. \qed
\end{proof}

An easy consequence of Lemma~\ref{lem:poly-monomial} is the fact that any union of finitely many tame graph classes is tame.

\section{Graph families with exponentially many minimal separators}
\label{sec:exp-constructions}

In this section we identify some families of graphs with exponentially many minimal separators. We give two constructions with structurally different properties. The first construction, explained in Section~\ref{sec:theta-graphs}, involves families of graphs of arbitrarily large maximum degree but without arbitrarily long induced paths. The second construction, explained in Section~\ref{sec:walls}, involves two families of graphs with small maximum degree but with arbitrarily long induced paths. In both cases, we make use of line graphs.

\subsection{Theta graphs and their line graphs}\label{sec:theta-graphs}

Given positive integers $k$ and $\ell$, the \emph{$k,\ell$-theta graph} is the graph $\theta_{k,\ell}$ obtained as the union of $k$ internally disjoint paths of length $\ell$ with common endpoints $a$ and $b$. For every positive integer $\ell$, we define a family of graphs $\Theta_{\ell}$ in the following way: $\Theta_{\ell} = \{\theta_{k,\ell}\mid k\ge 2\}$. Note that $\ell$ refers to the length of each of the $a,b$-paths and not to the number of paths, which is unrestricted.

\begin{repobservation}{observation:theta}
\label{obs:theta}
For every integer $\ell\ge 3$, the class $\Theta_{\ell}$ is not tame.
\end{repobservation}
\begin{proof}
Let $k\ge 2$, $\ell\ge 3$, let $G = \theta_{k,\ell}$, and let $P^1,\ldots, P^k$ be paths in $G$ as in the definition of the theta graphs. Let $S$ be any set of vertices of $G$ containing exactly one internal vertex of each of the paths $P^j$. Then, the graph $G-S$ has two $S$-full components and Lemma~\ref{lem:characterization-minimal-separators} implies that $S$ is a minimal separator in $G$. Note that for every $j\in \{1,\ldots, k\}$, path $P^j$ has exactly $\ell-1$ internal vertices. It follows that $s(\theta_{k,\ell})\ge (\ell-1)^k$. Thus, as $|V(\theta_{k,\ell})| = k(\ell-1)+2$, we infer that for every fixed positive integer $\ell\ge 3$, the class $\Theta_{\ell}$ is not tame. \qed
\end{proof}

\begin{corollary}\label{cor:theta}
If $\mathcal{G}$ is a class of graphs such that
$\Theta_{\ell}\subseteq \mathcal{G}$ for some $\ell\ge 3$, then $\mathcal{G}$ is not tame.
\end{corollary}

Consider now the family of line graphs of theta graphs. More precisely,
given positive integers $k$ and $\ell$, let $L_{k,\ell}$ denote the line graph of $\theta_{k,\ell}$ and let $\mathcal{L}_{\ell} = \{L_{k,\ell}\mid k\ge 2\}$.

\begin{repproposition}{observation:line-theta}
\label{prop:line-theta}
For every integer $\ell\ge 2$, the class $\mathcal{L}_{\ell}$ is not tame.
\end{repproposition}

\begin{proof}
Let $k,\ell\ge 2$ and let $G = L_{k,\ell}$. Then, graph $G$ consists of two cliques $K$ and $K'$, each of size $k$, say with $K = \{a_1,\ldots, a_k\}$ and
$K' = \{b_1,\ldots, b_k\}$, and $k$ internally pairwise disjoint paths $P^1,\ldots, P^k$ such that for every $j\in \{1,\ldots, k\}$,
path $P^j$ is an $a_i,b_i$-path with $|V(P^j)| = \ell$,
$V(P^j)\cap K = \{a_j\}$ and
$V(P^j)\cap K' = \{b_j\}$. Consider any set $S$ of vertices of $G$ containing exactly one vertex from each of the paths $P^j$ and such that
$S\notin \{K,K'\}$. Then, the graph
$G-S$ has two $S$-full components and Lemma~\ref{lem:characterization-minimal-separators} implies that $S$ is a minimal separator in $G$. It follows that $s(L_{k,\ell})\ge \ell^k-2$.
Thus, as $|V(L_{k,\ell})| = k(\ell+1)$, we infer that for every fixed positive integer $\ell\ge 2$, the class $\mathcal{L}_{\ell}$ is not tame. \qed
\end{proof}

\begin{corollary}\label{cor:line-theta}
If $\mathcal{G}$ is a class of graphs such that
${\cal L}_{\ell}\subseteq \mathcal{G}$ for some $\ell\ge 2$, then  $\mathcal{G}$ is not tame.
\end{corollary}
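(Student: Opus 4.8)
The plan is to mirror the structure of the proof of Corollary~\ref{cor:theta}, since Corollary~\ref{cor:line-theta} stands in exactly the same relation to Proposition~\ref{prop:line-theta} as Corollary~\ref{cor:theta} does to Observation~\ref{obs:theta}. So the statement to be proved is: if $\mathcal{G}$ is a class of graphs with ${\cal L}_{\ell}\subseteq \mathcal{G}$ for some $\ell\ge 2$, then $\mathcal{G}$ is not tame. First I would fix such an $\ell\ge 2$ and suppose, for contradiction, that $\mathcal{G}$ is tame. By Lemma~\ref{lem:poly-monomial}, there is a non-negative integer $k_0$ with $s(G)\le |V(G)|^{k_0}$ for all $G\in \mathcal{G}$; in particular this bound holds for every $L_{k,\ell}$ with $k\ge 2$, since all of these graphs lie in $\mathcal{G}$.

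Next I would plug the two estimates from Proposition~\ref{prop:line-theta} into this inequality. That proposition gives $s(L_{k,\ell})\ge \ell^k-2$ and $|V(L_{k,\ell})| = k(\ell+1)$, so for every $k\ge 2$ we would obtain
\[
\ell^k - 2 \;\le\; s(L_{k,\ell}) \;\le\; \bigl(k(\ell+1)\bigr)^{k_0}.
\]
The final step is a growth-rate comparison: since $\ell\ge 2$, the left-hand side grows exponentially in $k$, whereas the right-hand side $\bigl(k(\ell+1)\bigr)^{k_0} = (\ell+1)^{k_0}\, k^{k_0}$ grows only polynomially in $k$. Hence for all sufficiently large $k$ the inequality fails, a contradiction. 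Therefore $\mathcal{G}$ is not tame.

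Alternatively, and perhaps more cleanly, one can avoid Lemma~\ref{lem:poly-monomial} and argue directly from Definition~\ref{def:poly}: if $\mathcal{G}$ were tame there would be a polynomial $p$ with $s(G)\le p(|V(G)|)$ for all $G\in\mathcal{G}$, and then $\mathcal{L}_\ell\subseteq\mathcal{G}$ would force $s(G)\le p(|V(G)|)$ for all $G\in\mathcal{L}_\ell$, i.e. $\mathcal{L}_\ell$ would itself be tame, contradicting Proposition~\ref{prop:line-theta}. This is really just the observation that tameness is inherited by subclasses (an immediate consequence of Observation~\ref{obs:trivial}-style monotonicity, or directly from the definition). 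I would expect the ``main obstacle'' here to be essentially nonexistent — this is a routine corollary — so the only thing to be careful about is phrasing: making explicit that $\mathcal{L}_\ell\subseteq\mathcal{G}$ means the bound witnessing tameness of $\mathcal{G}$ restricts to a bound on $\mathcal{L}_\ell$, and then quoting Proposition~\ref{prop:line-theta}. The same two-line argument, mutatis mutandis, also yields Corollary~\ref{cor:theta} from Observation~\ref{obs:theta}, which is presumably why the authors state both corollaries in parallel.
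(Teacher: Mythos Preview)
Your proposal is correct; in fact the paper states this corollary without proof, treating it as an immediate consequence of Proposition~\ref{prop:line-theta} (just as Corollary~\ref{cor:theta} is stated without proof after Observation~\ref{obs:theta}). Your second, two-line argument---tameness is inherited by subclasses, so $\mathcal{L}_\ell\subseteq\mathcal{G}$ tame would force $\mathcal{L}_\ell$ tame, contradicting Proposition~\ref{prop:line-theta}---is exactly the intended reasoning.
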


\begin{corollary}\label{cor:h2containedinf}
The class of $\{3\1$, diamond$\}$-free graphs is not tame.
\end{corollary}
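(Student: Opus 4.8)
The plan is to apply Corollary~\ref{cor:line-theta}: it suffices to exhibit an integer $\ell\ge 2$ such that every graph in $\mathcal{L}_\ell$ is $\{3\1,\textrm{diamond}\}$-free. I would try $\ell=2$, so that $G=L_{k,2}$ is the line graph of the $k,2$-theta graph $\theta_{k,2}$. Recall that $\theta_{k,2}$ consists of two vertices $a,b$ joined by $k$ internally disjoint paths of length~$2$; equivalently, $\theta_{k,2}\cong K_{2,k}$ with parts $\{a,b\}$ and the $k$ midpoints. So $L_{k,2}=L(K_{2,k})$, and I would describe this graph concretely: it has $2k$ vertices, namely $a_j$ (the edge $a$--midpoint$_j$) and $b_j$ (the edge $b$--midpoint$_j$) for $j\in\{1,\dots,k\}$; $\{a_1,\dots,a_k\}$ and $\{b_1,\dots,b_k\}$ are cliques (edges at $a$, resp. at $b$), and $a_j$ is adjacent to $b_j$ (sharing midpoint$_j$) but to no other $b_i$.

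\smallskip

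For the $3\1$-freeness: any independent set in $G$ contains at most one $a_j$ and at most one $b_i$ (since the $a$'s form a clique and the $b$'s form a clique), so it has size at most~$2$; hence $G$ contains no $3\1$. For the diamond-freeness: a diamond is $K_4$ minus an edge, i.e.\ four vertices inducing exactly five edges, equivalently two triangles sharing an edge. I would check that no edge of $G$ lies in two triangles. An edge of $G$ is of one of three types: $a_ia_j$, $b_ib_j$, or $a_jb_j$. For an edge $a_ia_j$ inside the clique $\{a_1,\dots,a_k\}$, a common neighbor must be adjacent to both $a_i$ and $a_j$; the only vertices outside that clique adjacent to $a_i$ resp.\ $a_j$ are $b_i$ resp.\ $b_j$, and these are distinct and not adjacent to the other $a$, so the common neighbors of $a_i$ and $a_j$ are exactly $\{a_h: h\ne i,j\}$, which form a clique — so the triangles on $a_ia_j$ pairwise share only the edge $a_ia_j$, forbidding a diamond on this edge. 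By the $a\leftrightarrow b$ symmetry the same holds for $b_ib_j$. Finally, the edge $a_jb_j$ has common neighbors: a vertex adjacent to $a_j$ is either some $a_h$ ($h\ne j$) or $b_j$; among these, adjacency to $b_j$ leaves only... none of the $a_h$ are adjacent to $b_j$, so $a_jb_j$ lies in no triangle at all. Hence no edge of $G$ is in two triangles, so $G$ is diamond-free.

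\smallskip

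Thus $\mathcal{L}_2$ is contained in the class of $\{3\1,\textrm{diamond}\}$-free graphs, and Corollary~\ref{cor:line-theta} gives that the latter class is not tame. I do not anticipate a real obstacle here; the only thing to be careful about is the concrete description of $L(K_{2,k})$ and the exhaustive (but short) case check that no edge is in two triangles — in particular not overlooking that $\ell=2$ makes the "paths" $P^j$ into single vertices, so one should double-check the adjacency structure directly from the line-graph definition rather than from the generic description in Proposition~\ref{prop:line-theta}.
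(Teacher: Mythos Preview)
Your proposal is correct and follows exactly the paper's approach: show that $\mathcal{L}_2 = \{L(K_{2,k}) : k \ge 2\}$ consists of $\{3\1,\textrm{diamond}\}$-free graphs and invoke Corollary~\ref{cor:line-theta}. One minor slip in wording: ``no edge lies in two triangles'' is not the right criterion for diamond-freeness (it fails already in $K_4$); what you actually verify---and what suffices---is that for every edge the common neighborhood of its endpoints is a clique.
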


\subsection{Elementary walls and their line graphs}\label{sec:walls}

Let $r,s\ge 2$ be integers. An \emph{$r\times s$-grid} is the graph with vertex set $\{0,\ldots, r-1\}\times \{0,\ldots, s-1\}$ in which two vertices $(i, j)$ and $(i',j')$ are adjacent if and only if $|i-i'|+|j-j'| = 1$.
Given an integer $h\ge 2$, an \emph{elementary wall of height $h$} is the graph $W_h$ obtained from the $(2h+2)\times (h+1)$-grid by deleting all edges with endpoints $(2i + 1, 2j)$ and $(2i + 1, 2j+1)$ for all $i\in \{0,1,\ldots, h\}$ and $j \in \{0,1,\ldots, \lfloor (h-1)/2\rfloor\}$, deleting all edges with endpoints $(2i, 2j-1)$ and $(2i, 2j)$ for all $i\in \{0,1,\ldots, h\}$ and $j \in \{1,\ldots, \lfloor h/2\rfloor\}$, and deleting the two resulting vertices of degree one. Note that an elementary wall of height $h$ consists of $h$ levels each containing $h$ bricks, where a brick is a cycle of length six; see Fig.~\ref{fig:wall}(a).

\begin{figure}[ht!]
\begin{center}
\includegraphics[width=\textwidth]{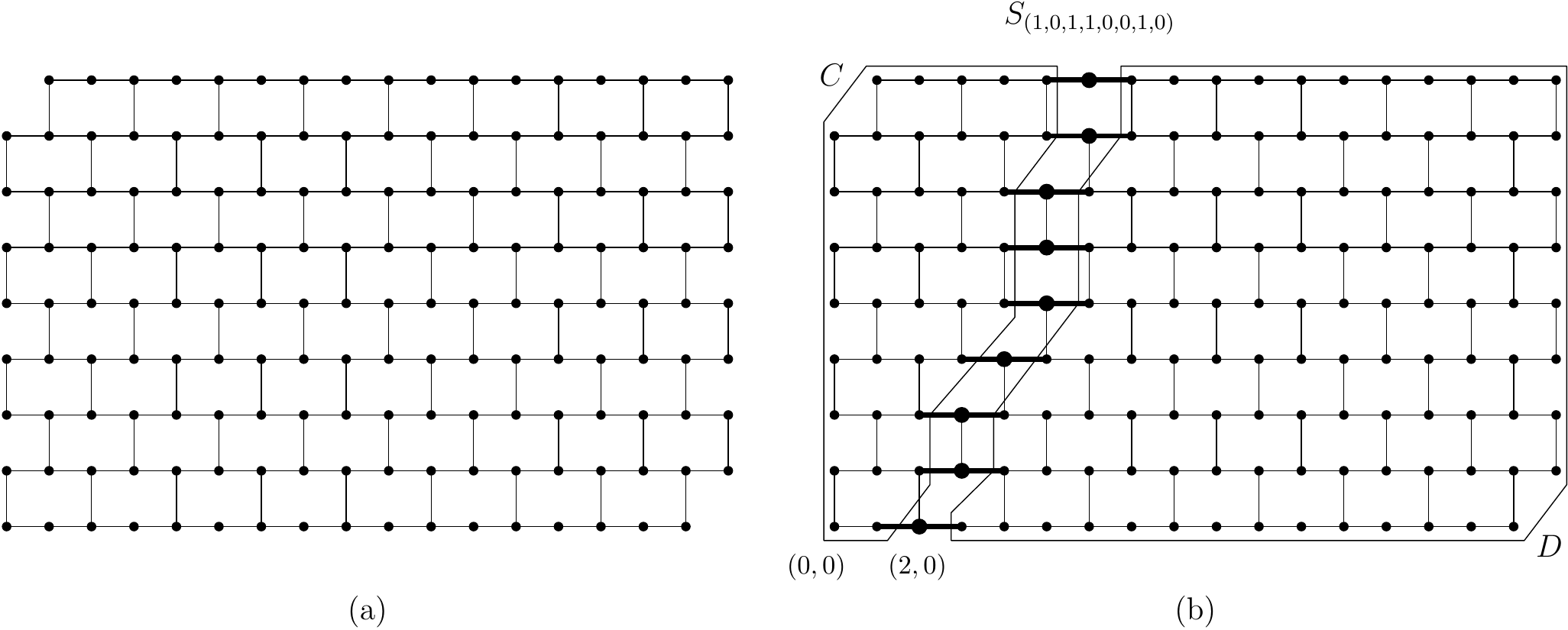}
\end{center}
\caption{(a) An elementary wall of height $8$. (b) A minimal separator $S_{(1,0,1,1,0,0,1,0)}$ in $W_8$ 
and the two components of $W_8-S_{(1,0,1,1,0,0,1,0)}$.}\label{fig:wall}
\end{figure}

Grids contain exponentially many minimal separators~\cite{Suchan}. A similar construction works for walls.

\begin{repproposition}{proposition:walls}
\label{prop:walls}
For every integer $h\ge 2$, an elementary wall of height $h$ has at least $2^h$ minimal separators.
\end{repproposition}

\begin{proof}
Fix an integer $h\ge 2$. We will define a family of $2^h$ subsets of $V(W_h)$ and show that each of them is a minimal separator in $W_h$. For each binary sequence of length $h$, say $x= (x_1,\ldots, x_h)\in \{0,1\}^h$, we define a set $S_x$ by the following rule: $S_x = \{v^{x,0}, v^{x,1}, \ldots, v^{x,h}\}$ where $v^{x,0} = (2,0)$ (independently of $x$) and for all $j \in \{1,\ldots, h\}$, we set $v^{x,j} = v^{x,j-1}+(x_j,1)$, where addition is performed component-wise.
Clearly, for each $x\in \{0,1\}^h$ and each $j\in \{1,\ldots, h\}$, we have  $v^{x,j} = (\sum_{i= 1}^jx_i+2,j)\le (h+2,h)$, where comparison is performed component-wise. It follows that $S_x\subseteq V(W_h)$. Moreover, the graph $W_h-S_x$ has exactly two connected components, say $C$ and $D$, with $V(C) = \bigcup_{j = 0}^h\{(i,j)\in V(W_h)\mid i<v^{x,j}_1\}$ and $V(D) = \bigcup_{j = 0}^h\{(i,j)\in V(W_h)\mid i>v^{x,j}_1\}$. Note that each vertex $v^{x,j}\in S_x$ has a neighbor in $C$, namely $v^{x,j}-(1,0)$, and a neighbor in $D$, namely
$v^{x,j}+(1,0)$. By Lemma~\ref{lem:characterization-minimal-separators}, set $S_x$ is a minimal separator in $W_h$. Since the sets $S_x$ are pairwise distinct, this completes the proof. Fig.~\ref{fig:wall}(b) shows an example with $h = 4$ and $x = (1,0,1,1)$. The thick horizontal edges can be used to justify the fact that $C$ and $D$ are $S_x$-full components of $W_h-S_x$. \qed
\end{proof}

Another useful family with exponentially many minimal separators is given by the line graphs of elementary walls; see Fig.~\ref{fig:line-wall}(a) for an example.

\begin{figure}[ht!]
\begin{center}
\includegraphics[width=\textwidth]{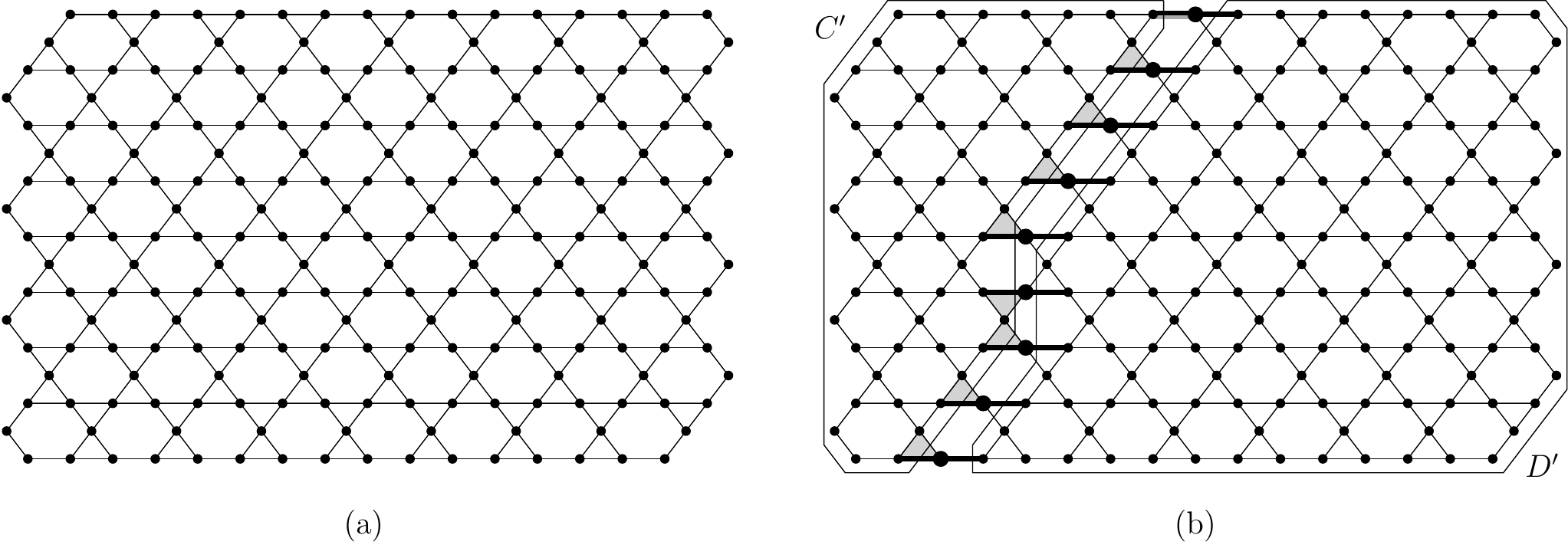}
\end{center}
\caption{(a) $L(W_8)$, the line graph of an elementary wall of height $8$. (b) The set of nine vertices depicted with large black disks is a minimal separator $S_{(1,1,0,0,1,1,1,1)}'$ in $L(W_8)$, which corresponds to the minimal separator $S_{(1,1,0,0,1,1,1,1)}$ in $W_8$. The two
components
of $L(W_8)-S_{(1,1,0,0,1,1,1,1)}'$ are also depicted.}\label{fig:line-wall}
\end{figure}

\begin{repproposition}{proposition:line-walls}
\label{prop:line-walls}
For every even integer $h\ge 2$, the graph $L(W_{h})$ has at least $2^{h/2}$ minimal separators.
\end{repproposition}

\begin{proof}
We use a modification of the construction used in the proof of Proposition~\ref{prop:walls}. We again consider the minimal separators $S_x$ in $W_h$ constructed in the proof of Proposition~\ref{prop:walls}; however, for technical reasons that will simplify the argument, we restrict ourselves only to the $2^{h/2}$ minimal separators $S_x$ in $W_h$ that arise
from binary sequences $x\in X_h$, where $$X_h = \{(x_1,\ldots, x_h)\in \{0,1\}^h\mid x_{2i-1} = x_{2i}\textrm{ for all }i\in \{1,2,\ldots, h/2\}\}\,.$$
Recall that for every $x\in X_h$, we have $S_x = \{v^{x,0}, v^{x,1}, \ldots, v^{x,h}\}$ where $v^{x,0} = (2,0)$ and $v^{x,j} = v^{x,j-1}+(x_j,1)$ for all $j \in \{1,\ldots, h\}$. A set of $2^{h/2}$ minimal separators of $L(W_h)$ can be obtained as follows. For each $x\in X_h$, we define a set $S_x'\subseteq V(L(W_h))$ as follows: $S_x' = \{e^{x,j}\mid v^{x,j}\in S_x\}$ where $e^{x,j}$ is the vertex of the line graph of $W_h$ corresponding to the edge in $W_h$ joining vertex $v^{x,j}$ with vertex $v^{x,j}+(1,0)$.

Since the mapping is clearly one-to-one, the set $\{S_x'\mid x\in X_h\}$ is of cardinality $2^{h/2}$. Therefore, to complete the proof it suffices to show that for every $x\in X_h$, set $S_x'$ is a minimal separator in $L(W_h)$. Let us first argue that the graph $L(W_h)-S_x'$ is disconnected.
Vertices of the wall $W_h$ correspond bijectively to maximal cliques of its line graph. For every $x\in X_h$, every vertex of the form $v^{x,j}$ where $j\in \{1,\ldots, h-1\}$ corresponds to a triangle (clique of size three) in $L(W_h)$, while vertex $v^{x,h}$ corresponds to a clique of size two.
Let us say that a triangle in $L(W_h)$ is \emph{upward pointing} if it arises from a vertex in $W_h$ whose coordinates have even sum, and \emph{downward pointing}, otherwise. (We draw this terminology from the planar embeddings of the line graphs of the walls following the example given in Fig.~\ref{fig:line-wall}.) It is not difficult to see that for every $x\in X_h$ and every even $i\in \{0,1,\ldots, h-2\}$, vertex $v^{x,i}$ corresponds to an upward triangle, while odd-indexed vertices may correspond to either upward or downward pointing triangles. It follows that for no index $i\in \{0,1,\ldots, h-1\}$,
vertices $v^{x,i}$ and $v^{x,i+1}$ can both correspond to downward pointing triangles. This property ensures that the graph $L(W_h)-S_x'$ is disconnected, with exactly two components $C'$ and $D'$ such that for all $v^{x,j}\in S_x$, component $C'$ contains all vertices of the form $e^{x,j^-}$, where $e^{x,j^-}\in V(L(W_h))$ is the vertex corresponding to the edge in $W_h$ joining vertex $v^{x,j}$ with vertex $v^{x,j}-(1,0)$,
while component $D'$ contains all vertices of the form $e^{x,j^+}$,
where $e^{x,j^+}\in V(L(W_h))$ is the vertex corresponding to the edge in $W_h$ joining vertex $v^{x,j}+(1,0)$ with vertex $v^{x,j}+(2,0)$.
Furthermore, since for every vertex
$e^{x,j}\in S_x'$, vertices $e^{x,j^-}$ and $e^{x,j^+}$ are both adjacent to $e^{x,j}$ in $L(W_h)$, this also implies, by Lemma~\ref{lem:characterization-minimal-separators}, that $S_x'$ is a minimal separator in $L(W_h)$. This completes the proof.
Fig.~\ref{fig:line-wall}(b) shows an example with $h = 8$ and $x = (1,1,0,0,1,1,1,1)$. The thick horizontal edges can be used to justify the fact that $C'$ and $D'$ are $S_x'$-full components of $L(W_h)-S_x'$.
\qed\end{proof}

Since line graphs of elementary walls are $\{$claw, $K_4$, $C_4$, diamond$\}$-free, Proposition~\ref{prop:line-walls} implies the following.

\begin{corollary}
\label{cor:wallcontainedinf}
The class of $\{$claw, $K_4, C_4$, diamond$\}$-free graphs is not tame.
\end{corollary}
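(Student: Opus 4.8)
The statement has already been reduced, in the sentence immediately preceding it, to the observation that line graphs of elementary walls are $\{$claw, $K_4$, $C_4$, diamond$\}$-free; the plan is therefore to supply that (routine) verification and then invoke Proposition~\ref{prop:line-walls}. Concretely, I would show that the class of $\{$claw, $K_4$, $C_4$, diamond$\}$-free graphs contains the infinite family $\{L(W_h)\mid h\ge 2\text{ even}\}$. By Proposition~\ref{prop:line-walls}, each such graph satisfies $s(L(W_h))\ge 2^{h/2}$, whereas $|V(L(W_h))| = |E(W_h)| = O(h^2)$ since $W_h$ has $O(h^2)$ vertices and maximum degree $3$. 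Thus no polynomial $p$ can satisfy $s(L(W_h))\le p(|V(L(W_h))|)$ for all even $h$, so by Lemma~\ref{lem:poly-monomial} (exactly as in Corollaries~\ref{cor:theta} and~\ref{cor:line-theta}) the class is not tame.

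The only real work is checking the forbidden-subgraph property for $G = L(W_h)$. \emph{Claw:} line graphs are always $K_{1,3}$-free, since if three edges of $W_h$ are all adjacent to a common edge $f$, then two of them meet the same endpoint of $f$ and hence are adjacent to each other. \emph{$K_4$:} $W_h$ is a subgraph of a grid, hence bipartite and in particular triangle-free; thus any clique of $L(W_h)$ of size at least three consists of edges of $W_h$ through a single common vertex, and since $\Delta(W_h)=3$ such a clique has size at most $3$, so $L(W_h)$ is $K_4$-free. \emph{Diamond:} suppose $\{a,b,c,d\}$ induce a diamond in $L(W_h)$ with $cd$ the missing edge; writing the shared endpoint of the adjacent edges $a,b$ of $W_h$ as $x$, so $a=\{x,y\}$, $b=\{x,z\}$ with $y\ne z$, triangle-freeness of $W_h$ forces every common neighbour of $a$ and $b$ — in particular both $c$ and $d$ — to be an edge through $x$; but then $c$ and $d$ share $x$ and are adjacent, a contradiction. \emph{$C_4$:} if edges $e_1e_2e_3e_4$ induce a $C_4$ in $L(W_h)$, pick $v_{12}\in e_1\cap e_2$, $v_{23}\in e_2\cap e_3$, $v_{34}\in e_3\cap e_4$, $v_{41}\in e_4\cap e_1$; using that $e_1\not\sim e_3$ and $e_2\not\sim e_4$ one checks these four vertices of $W_h$ are pairwise distinct, and then $v_{12}v_{23}v_{34}v_{41}$ is a $C_4$ in $W_h$ — impossible, since the bricks of $W_h$ are hexagons and $W_h$ has girth $6$.

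Combining the four checks gives $L(W_h)\in\{$claw, $K_4$, $C_4$, diamond$\}$-free for every even $h\ge 2$, and non-tameness follows as in the first paragraph. There is no substantive obstacle: the argument is just Proposition~\ref{prop:line-walls} plus elementary facts about line graphs and walls. The points most prone to slips are the diamond and $C_4$ cases — each of which, as sketched above, reduces cleanly to $W_h$ being triangle-free and $C_4$-free — and the bookkeeping of $|V(L(W_h))| = |E(W_h)| = O(h^2)$, which is what guarantees the exponential lower bound on $s(L(W_h))$ cannot be absorbed into a polynomial in the number of vertices.
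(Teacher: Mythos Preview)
Your proposal is correct and follows exactly the paper's approach: the paper simply asserts that line graphs of elementary walls are $\{$claw, $K_4$, $C_4$, diamond$\}$-free and derives the corollary immediately from Proposition~\ref{prop:line-walls}, and you have supplied precisely the routine verification of that assertion together with the vertex-count bookkeeping that makes the exponential lower bound incompatible with tameness.
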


\section{Graph operations} \label{sec:operations}

We now study the effect of various graph operations on the number of minimal separators. The family of minimal separators of a disconnected graph can be computed from the families of minimal separators of its components, and a similar statement holds for graphs whose complements are disconnected. The correspondences are as follows, see~\cite[Theorem~3.1]{MR2971360}.

\begin{reptheorem}{theorem:joinunion}
\label{theorem:joinunion}
If $G$ is a disconnected graph, with components $G_1,\ldots, G_k$, then \hbox{$\Sep_G=\{\emptyset\} \cup \bigcup_{i=1}^k \Sep_{G_i}$}. If $G$ is the join of graphs $G_1,\ldots, G_k$, then
$S\in \Sep_G$ if and only if there exists some $i\in \{1,\ldots, k\}$ and some $S_i\in \Sep_{G_i}$ such that $S=S_i\cup (V(G)\setminus V(G_i))$.
\end{reptheorem}

In order to prove Corollary~\ref{cor:tame-disconnected}, we first derive the following corollaries expressing the 
number of minimal separators of a disconnected graph in terms of the 
numbers of minimal separators of its components, and similarly in the case when the complement of $G$ is disconnected.

\begin{repcorollary}{corollary:disconnected}
\label{cor:disconnected}
Let $G$ be a disconnected graph, with components $G_1,\ldots, G_k$. Then $s(G) = \sum_{i = 1}^ks(G_i)+1$.
\end{repcorollary}

\begin{proof}
Immediate from the first statement of Theorem~\ref{theorem:joinunion} and the fact that sets $\{\emptyset\}, \Sep_{G_1},\ldots, \Sep_{G_k}$, $i\in \{1,\dots, k\}$ are pairwise disjoint.
\qed\end{proof}

\begin{sloppypar}
\begin{corollary}\label{cor:co-disconnected}
Let $G$ be a graph with disconnected complement, with co-components $G_1,\ldots, G_k$. Then $s(G) = \sum_{i = 1}^ks(G_i)$.
\end{corollary}
\end{sloppypar}

\begin{proof}
Immediate from the second statement of Theorem~\ref{theorem:joinunion}
and the fact that sets $X_i = \{S_i\cup (V(G)\setminus V(G_i))\mid S_i\in \Sep(G_i)\}$ for $i \in \{1,\ldots, k\}$ are pairwise disjoint, with $|X_i| = s(G_i)$.
\qed\end{proof}

\begin{repcorollary}{corollary:tame-disconnected}
\label{cor:tame-disconnected}
Let $\mathcal{G}$ be a hereditary class of graphs and let $\mathcal{G}'$ be the class of connected graphs in $\mathcal{G}$. Then $\mathcal{G}$ is tame if and only if $\mathcal{G}'$ is tame.
\end{repcorollary}

\begin{proof}
Since $\mathcal{G}$ is hereditary, we have $\mathcal{G}'\subseteq \mathcal{G}$. Hence, if $\mathcal{G}$ is tame, then so is $\mathcal{G}'$.
Suppose that $\mathcal{G}'$ is tame. By Lemma~\ref{lem:poly-monomial}, there exists a least integer $k\ge 2$ such that $s(G)\le |V(G)|^k$ for all $G\in \mathcal{G}'$. Let $G\in \mathcal{G}\setminus \mathcal{G'}$ and let $G_1,\ldots, G_p$ (with $p\ge 2$) be the
components of $G$. Since $G$ is disconnected and for all $i\in \{1,\ldots, p\}$ we have $G_i\in \mathcal{G}'$, we infer using Corollary~\ref{cor:disconnected} that
$s(G)= \sum_{i = 1}^p s(G_i)+1\le \sum_{i = 1}^p |V(G_i)|^k+1\le
\left(\sum_{i = 1}^p |V(G_i)|\right)^k = |V(G)|^k$. The first
inequality holds by the assumption on $\mathcal{G}'$ and the second one since $k\ge 2$. It follows that $\mathcal{G}$ is tame. \qed
\end{proof}

\begin{repcorollary}{corollary:tame-co-connected}
\label{cor:tame-co-connected}
Let $\mathcal{G}$ be a hereditary class of graphs and let $\mathcal{G}'$ be the class of co-connected graphs in $\mathcal{G}$. Then $\mathcal{G}$ is tame if and only if $\mathcal{G}'$ is tame.
\end{repcorollary}

\begin{proof}
Since $\mathcal{G}$ is hereditary, we have $\mathcal{G}'\subseteq \mathcal{G}$. Thus, if $\mathcal{G}$ is tame, then so is $\mathcal{G}'$.
Suppose that $\mathcal{G}'$ is tame. By Lemma~\ref{lem:poly-monomial}, there exists a least integer $k\ge 1$ such that $s(G)\le |V(G)|^k$ for all $G\in \mathcal{G}'$. Let $G\in \mathcal{G}\setminus \mathcal{G'}$ and let $G_1,\ldots, G_p$ (with $p\ge 2$) be the co-components of $G$.
Since for all $i\in \{1,\ldots, p\}$ we have $G_i\in \mathcal{G}'$, we infer using Corollary~\ref{cor:co-disconnected} and the assumption on $\mathcal{G}'$ that
$s(G)=\sum_{i = 1}^p s(G_i)\le \sum_{i = 1}^p |V(G_i)|^k\le
\left(\sum_{i = 1}^p |V(G_i)|\right)^k = |V(G)|^k$. The first
inequality holds by the assumption on $\mathcal{G}'$ and the second one since $k\ge 1$. It follows that $\mathcal{G}$ is tame. \qed
\end{proof}

McKee observed in~\cite{MR3743965} that if $G_1$ is an induced subgraph of
$G_2$, then every minimal separator of $G_1$ is contained in a minimal separator in $G_2$. The proof actually shows that the following monotonicity property holds.

\begin{repproposition}{proposition:monotonicity}
\label{prop:monotonicity}
If $G_1$ is an induced subgraph of $G_2$, then $s(G_1)\le s(G_2)$.
\end{repproposition}

\begin{proof}
    We define a one-to-one mapping $\phi$ from the set $\Sep_{G_1}$ to the set $\Sep_{G_2}$. For every minimal separator $S$ in $G_1$, we construct $\phi(S)$ as follows.
    Since $S$ is a minimal separator of $G_1$, there exist two $S$-full components of $G-S$, say $C$ and $D$. Clearly, the set $S\cup (V(G_2)\setminus V(G_1))$ separates $C$ and $D$ in $G_2$.
    Let $\phi(S)$ be any subset of $S\cup (V(G_2)\setminus V(G_1))$ separating $C$ and $D$ that is inclusion-minimal with this property. (Such a set can be obtained, for example, by iteratively deleting vertices from $S\cup (V(G_2)\setminus V(G_1))$ if the resulting set still separates $C$ from $D$.) By construction, $C$ and $D$ are $\phi(S)$-full components of $G_2$, hence $\phi(S)$ is a minimal separator in $G_2$.
Moreover, we clearly have $S\subseteq \phi(S)$ and since by construction
    $\phi(S)\cap V(G_1)\subseteq S$, it follows that $S = \phi(S)\cap V(G_1)$.
Thus, if $\phi(S) = \phi(S')$ for $S,S'\in \Sep_{G_1}$, then $S = S'$, and mapping $\phi$ is one-to-one. Hence $s(G_1)\le s(G_2)$, as claimed. \qed
\end{proof}

In view of Proposition~\ref{prop:monotonicity}, it is natural to ask how large can the gap $s(G_2)-s(G_1)$ be if the graphs $G_1$ and $G_2$ are not ``too different'', for example, if $G_1$ is obtained from $G_2$ by deleting only one vertex. In the following three propositions we identify three  properties of a vertex $v$ in a graph $G$ such that deleting $v$ either leaves the minimal separators unchanged or decreeases it by one.

\begin{proposition}\label{prop:universalvertex}
Let $G$ be a graph with at least two vertices and let $v$ be a universal vertex in $G$. Then $s(G) = s(G-v)$.
\end{proposition}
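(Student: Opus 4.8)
The plan is to recognize that a graph with a universal vertex is a join, and then to read off its minimal separators using Theorem~\ref{theorem:joinunion}. Concretely, if $v$ is universal in $G$, I would set $G_1=G[\{v\}]$ and $G_2=G-v$; then $V(G)=V(G_1)\cup V(G_2)$ and, since $v$ is adjacent to every vertex of $V(G_2)$, we have $E(G)=E(G_1)\cup E(G_2)\cup\{xy\mid x\in V(G_1),\,y\in V(G_2)\}$, i.e.\ $G=G_1\ast G_2$. The hypothesis $|V(G)|\ge 2$ guarantees that $G_2$ is nonempty, so this is a legitimate join decomposition into two factors. Moreover $G_1$ is a one-vertex graph, so it has no pair of non-adjacent vertices and hence $\Sep_{G_1}=\emptyset$.

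Next I would apply the join part of Theorem~\ref{theorem:joinunion} to $G=G_1\ast G_2$: a set $S$ belongs to $\Sep_G$ if and only if $S=S_i\cup(V(G)\setminus V(G_i))$ for some $i\in\{1,2\}$ and some $S_i\in\Sep_{G_i}$. Since $\Sep_{G_1}=\emptyset$, only the index $i=2$ can contribute, and $V(G)\setminus V(G_2)=\{v\}$; therefore $\Sep_G=\{S'\cup\{v\}\mid S'\in\Sep_{G-v}\}$. The map $S'\mapsto S'\cup\{v\}$ is injective on $\Sep_{G-v}$, since $v\notin V(G-v)$ and so $S'=(S'\cup\{v\})\setminus\{v\}$ is determined by its image. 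Hence $|\Sep_G|=|\Sep_{G-v}|$, that is, $s(G)=s(G-v)$. An alternative route is via Corollary~\ref{cor:co-disconnected}: a universal vertex of $G$ is an isolated vertex of $\overline G$, so the co-components of $G$ are $G[\{v\}]$ together with the co-components of $G-v$, and since $s(G[\{v\}])=0$ the same equality drops out.

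I do not anticipate a genuine obstacle; once the join decomposition is set up the argument is pure bookkeeping. The only points needing a little care are to use the hypothesis $|V(G)|\ge 2$ so that $G-v$ is a nonempty factor of the join, to note that a one-vertex graph has no minimal separators (giving $\Sep_{G_1}=\emptyset$), and to check the degenerate case $G=K_2$, where $\Sep_G=\Sep_{G-v}=\emptyset$ and the identity still holds.
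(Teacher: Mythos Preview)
Your argument is correct and matches the paper's: it too observes that $G=(G-v)\ast P_1$ and reads off $s(G)=s(G-v)$ from the join statement, via Corollary~\ref{cor:co-disconnected} (which you mention as your alternative route). Your direct use of Theorem~\ref{theorem:joinunion} is just the same reasoning unpacked one step.
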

\begin{proof}
Immediate from Corollary~\ref{cor:co-disconnected}, using the fact that $G$ is isomorphic to the join of $G-v$ and $P_1$, and that $s(P_1)=0$. 
\end{proof}

To prove Proposition~\ref{prop:truetwin}, the following lemma will be useful.

\begin{replemma}{lemmaa:sameneighbors}
\label{lemma:sameneighbors}
Let $G$ be a graph and $v,w$ vertices in $G$ such that $N_G(v)\setminus \{w\}=N_G(w)\setminus\{v\}$. If $S$ is a minimal separator in $G$, then
$v\in S$ if and only if $w\in S$.
\end{replemma}

\begin{proof}
Let $S\in \Sep_G$. By symmetry, it suffices to show that $v\in S$ implies $w\in S$. Suppose for a contradiction that $v\in S$ but $w\not\in S$.
By Lemma~\ref{lem:characterization-minimal-separators}, the graph $G-S$ has two $S$-full components, say $C$ and $D$.
We may assume without loss of generality that $w\not\in V(C)$.
Since $C$ is an $S$-full component of $G-S$,
vertex $v\in S$ has a neighbor $y$ in $V(C)$.
But now, $y$ is a vertex contained in $N_G(v)\setminus \{w\}$
but not in $N_G(w)\setminus\{v\}$, a contradiction. \qed
\end{proof}

\begin{repproposition}{proposition:truetwin}
\label{prop:truetwin}
Let $G$ be a graph having a pair of true twins $v,w$ with $v\neq w$.
Then $s(G)= s(G-v)$.
\end{repproposition}

\begin{proof}
Let $G' = G-v$. From Proposition~\ref{prop:monotonicity} it follows that $s(G)\ge s(G-v)$, so we have to prove that $s(G)\le s(G-v)$.
We will prove
it by constructing a one-to-one mapping $\phi$ from $\Sep_{G}$ to $\Sep_{G'}$. The function is defined by the following rule: for every $S\in \Sep_G$, we set $\phi(S)=S\setminus \{v\}$.

We first show that $\phi$ maps minimal separators in $G$ to minimal separators in $G'$. Let $S\in \Sep_G$. If $v\in S$, then $\phi(S) = S\setminus \{v\}$, which is a minimal separator in $G'$ by Corollary~\ref{cor:deleting-a-vertex-in-S}. Suppose now that $v\not\in S$. Then $\phi(S) = S$ and $w\not\in S$ by Lemma~\ref{lemma:sameneighbors}. Let $C$ and $D$ be two $S$-full components of $G-S$ and let $K$ be the component of $G-S$ containing $v$.
If $K\not\in \{C,D\}$, then $C$ and $D$ are two $S$-full components of
$G'-S$ and hence $\phi(S) = S\in \Sep_{G'}$ in this case.
We may thus assume that $K = C$. Note that $w\in V(C)$ since vertices $v$ and $w$ are adjacent in $G$. Moreover, since $v$ and $w$ are true twins in $G$, they are also true twins in $C$. This implies that the graph $C-v$ is connected and hence a component of $G'-S$. Note that $D$ is an $S$-full component of $G'-S$. We complete the proof that $S$ is a minimal separator in $G'$ by showing that $C-v$ is also an $S$-full component of $G'-S$. Consider an arbitrary vertex $x\in S$. Since $C$ is an $S$-full component of $G-S$, vertex $x$ has a neighbor  in $C$. However, since $v$ and $w$ are true twins in $G$, it cannot be that $N_G(x)\cap V(C) = \{v\}$. It follows that $x$ also has a neighbor in $C-v$. Therefore, $C-v$ is an $S$-full component of $G'-S$, as claimed.

It remains to show that $\phi$ is one-to-one. Let $S_1$ and $S_2$ be distinct minimal separators in $G$. If $S_1\cap \{v\}\neq S_2\cap \{v\}$, then we have $\phi(S_1)\cap \{w\}\neq \phi(S_1)\cap \{w\}$, inferring that $\phi(S_1)\neq \phi(S_2)$.
If none of the sets $S_1$, $S_2$ contains $v$, then $\phi_1(S_1)=S_1$ and $\phi(S_2)=S_2$. If both of the sets $S_1$ and $S_2$ contain $v$, then $\phi(S_1)=S_1\setminus \{v\}\neq S_2\setminus \{v\}=\phi(S_2)$.
It follows that $\phi$ is one-to-one, as claimed.
\qed\end{proof}

\begin{repproposition}{proposition:simplicialvertex}
\label{prop:simplicialvertex}
Let $G$ be a graph and let $v$ be a simplicial vertex in $G$.
Then $s(G-v)\le s(G)\le s(G-v)+1\,.$
\end{repproposition}

\begin{proof}
Let $K=N_G(v)$ and $G' = G-v$. The inequality $s(G-v)\le s(G)$ follows directly from Proposition~\ref{prop:monotonicity}. We will prove the second inequality by showing the inclusion $\Sep_{G} \subseteq \Sep_{G'}\cup\{K\}$.

Let $S\in \Sep_{G}$. If $S = K$, then $S\in \Sep_{G'}\cup\{K\}$, so we may assume that $S\neq K$. From Lemma~\ref{lem:characterization-minimal-separators} it follows that the graph $G-S$ has two $S$-full components $C$ and $D$. Since $V(C)$ and $V(D)$ are anticomplete to each other in $G$ and since $N_G(v)$ is a clique, it follows that $v\notin S$. Let $C_v$ be the component of $G-S$ containing $v$. If $C_v\not\in \{C,D\}$, then $C$ and $D$ are two $S$-full components of
$G'-S$ and hence $S\in \Sep_{G'}$. We may thus assume that $C_v = C$.
If $V(C)=\{v\}$, then $K\subseteq S$ and since $C$ is an $S$-full component of $G-S$, it follows that $S=K$, a contradiction. So we have that $|V(C)|\ge 2$; moreover, since $v$ is a simplicial vertex in a connected graph $C$, the graph $C-v$ is connected. Note that $D$ is an $S$-full component of $G'-S$. We complete the proof that $S$ is a minimal separator in $G'$ by showing that $C-v$ is also an $S$-full component of $G'-S$. Suppose for a contradiction that this is not the case, that is, there exists a vertex $x\in S$ without a neighbor in $C-v$. Since $C$ is an $S$-full component of $G-S$, vertex $x$ has a neighbor in $C$. Hence, $N_G(x)\cap V(C) = \{v\}$. However, since $C$ is a connected graph containing $v$, it follows that $N_G(v)\cap V(C)\neq \emptyset$. Taking an arbitrary $w\in N_G(v)\cap V(C)$, we now obtain that $x$ and $w$ are a pair of non-adjacent neighbors of $v$ in $G$, contradicting the fact that $v$ is a simplicial vertex in $G$. Therefore, $C-v$ is an $S$-full component of $G'-S$, as claimed, and hence $S$ is a minimal separator in $G'$; in particular, $S\in \Sep_{G'}\cup\{K\}$.
Since $S\in \Sep_G$ was arbitrary, this shows $\Sep_G\subseteq \Sep_{G'}\cup\{K\}$. \qed
\end{proof}

\section{Proof of Theorems~\ref{thm:main} and \ref{thm:main-dual}} \label{sec:proof-of-main-theorem}

In this section we prove Theorems~\ref{thm:main} and \ref{thm:main-dual}. We do this in several steps. We start with a proposition giving a necessary condition for a family $\F$ of graphs so that the class of $\F$-free graphs is tame.

\begin{repproposition}{proposition:forestcycle}
\label{prop:forestcycle}
Let $\F$ be a finite family of graphs such that
for every $F\in \F$ we have $F\nsubseteq_iP_4$, $F\nsubseteq_i 2\2$.
If, in addition, all graphs in $\F$ contain cycles or all of them are of girth more than $5$, then the class of $\F$-free graphs is not tame.
\end{repproposition}

\begin{proof}
We analyze the two cases separately.

\textit{Case 1: all graphs in $\F$ contain cycles.} Let $\ell$ be the smallest integer such that $\ell \ge 3$ and for every graph $F\in\F$, it holds that $F$ does not contain an induced cycle of length exactly $2\ell$. Note that $\ell $ is well-defined since $\F$ is finite. We claim that every graph in $\Theta_{\ell}$ is $\F$-free. Suppose for a contradiction that for some $k\ge 2$, the graph $\theta_{k,\ell}$ contains an induced subgraph isomorphic to some $F\in \F$. Since $F$ contains an induced cycle and every induced cycle contained in $\theta_{k,\ell}$ is of length $2\ell$, we infer that $F$ contains an induced cycle of length $2\ell$. However, this contradicts the definition of $\ell$. Thus, every graph in $\Theta_{\ell}$ is $\F$-free, as claimed. By Corollary~\ref{cor:theta}, the class of $\F$-free graphs is not tame.

\textit{Case 2: every graph in $\F$ is of girth more than five.}
We will show that in this case, every graph in ${\cal L}_2$ is $\F$-free.
By Corollary~\ref{cor:line-theta} this will imply that the class of
$\F$-free graphs is not tame. From the definition of ${\cal L}_2$ it follows that every graph in ${\cal L}_2$ has independence number two. Thus, to show that every graph in ${\cal L}_2$ is $\F$-free, it suffices to prove that $\alpha(F)\ge 3$ for all $F\in \F$. Suppose for a contradiction that $\alpha(F)\le 2$ for some $F\in \F$. Then $F$ is acyclic, since otherwise a shortest cycle in $F$ would be of length at least $6$, which would imply $\alpha(F)\ge 3$. Moreover, $F$ has at most two connected components.
If $F$ is connected, then $F$ is a tree with $\alpha(F)\le 2$.
In particular, the maximum degree of $F$ is at most $2$, hence
$F$ a path with at most four vertices, which implies $F\subseteq_i P_4$, a contradiction. If $F$ has exactly two components, then the condition $\alpha(F)\le 2$ implies that each component of $F$ is a complete graph. However, since $F$ is acyclic, each component of $F$ is induced subgraph of $P_2$. It follows that $F\subseteq_i 2P_2$, a contradiction. \qed
\end{proof}

The following sufficient condition is derived using Ramsey's theorem.

\begin{proposition}\label{prop:Ramsey}
For every two positive integers $k$ and $\ell$, the class of
$\{\2+kP_1,K_\ell+\2\}$-free graphs is tame.
\end{proposition}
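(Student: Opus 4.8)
The plan is to show that a $\{P_2+kP_1, K_\ell+P_2\}$-free graph $G$ has a bounded maximum co-degree, and then exploit this by passing to co-components (via Corollary~\ref{cor:tame-co-connected}). The key point is that forbidding $P_2+kP_1$ limits how large an independent set can be disjoint from a fixed edge, and forbidding $K_\ell+P_2$ limits how large a clique can be disjoint from a fixed edge; combined via Ramsey's theorem, this will bound the number of non-neighbors of any vertex that lies in an edge, hence the co-degree of every non-isolated vertex. Isolated vertices (in $G$) contribute nothing problematic because, by Theorem~\ref{theorem:joinunion} (the disjoint-union part) and Corollary~\ref{cor:disconnected}, we may assume $G$ is connected — or rather, by Corollary~\ref{cor:tame-co-connected}, we may assume $G$ is co-connected, which is the form we actually want.

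Concretely, first I would fix a co-connected $\{P_2+kP_1, K_\ell+P_2\}$-free graph $G$ with at least two vertices; since it is co-connected it has no universal vertex, so every vertex $v$ has at least one non-neighbor, and moreover (co-connectedness again, plus $|V(G)|\ge 2$) $G$ has at least one edge. I claim the maximum co-degree of $G$ is less than $R(\ell, k+1) + 2$. Suppose some vertex $v$ had at least $R(\ell,k+1)+1$ non-neighbors; pick a neighbor $u$ of $v$ (exists since... actually one must be slightly careful: $v$ might be isolated). Here is where I would instead argue: since $G$ is co-connected with $\ge 2$ vertices it has an edge $ab$. Consider the set $W$ of vertices non-adjacent to both $a$ and $b$ — i.e. the common non-neighborhood. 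Any independent set of size $k$ inside $W$ together with $\{a,b\}$ forms an induced $P_2+kP_1$ (the edge $ab$ is anticomplete to $W$ by definition of $W$), which is forbidden; hence $\alpha(G[W]) < k$. Any clique of size $\ell$ inside $W$ together with $\{a,b\}$ forms an induced $K_\ell + P_2$, forbidden; hence $\omega(G[W]) < \ell$. By Ramsey's theorem $|W| < R(\ell, k)$. This bounds the common non-neighborhood of the endpoints of any edge, which is not yet a co-degree bound, so the main obstacle is upgrading this to a genuine bound on the co-degree of a single vertex.

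To get the co-degree bound I would iterate: let $v$ be any vertex and let $\overline{N}(v)$ be its set of non-neighbors. If $\overline{N}(v)$ contains an independent set $I$ of size $k+1$, then $G[\overline N(v)\cup\{v\}] \supseteq_i P_2 + kP_1$? No — $v$ is non-adjacent to all of $I$, so that gives $(k+1)P_1$, not a $P_2$. The cleaner route: bound $\alpha(G)$ and $\omega(G)$ directly. If $\alpha(G)\ge k+2$, take an independent set of size $k+2$; it contains no $P_2$, so this alone does not contradict anything — but now I instead observe that forbidding $P_2+kP_1$ forces: in any induced subgraph, every connected component with an edge kills at most $k-1$ further independent vertices. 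Rather than belabor this, the robust argument is: since $G$ is co-connected and has an edge $ab$, every other vertex is adjacent to $a$ or to $b$ except for the $<R(\ell,k)$ vertices of $W$; so $V(G) = N[a]\cup N[b]\cup W$ with $|W|<R(\ell,k)$. Now the same bound applies recursively inside $G[N[a]]$ and $G[N[b]]$, but these need not be co-connected — so apply Corollary~\ref{cor:tame-co-connected} and Corollary~\ref{cor:disconnected}/\ref{cor:co-disconnected} to reduce to co-connected pieces and induct on $\ell$: in $G[N(a)]$ the forbidden graph $K_\ell+P_2$ becomes effectively $K_{\ell-1}+P_2$-like behavior since $a$ is universal there. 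Thus by induction on $\ell$ (base case $\ell=1$: $K_1+P_2 = P_2+P_1$ forbidden, so combined with $P_2+kP_1$-freeness every graph in the class has a very restricted structure with $O(1)$ vertices once co-connected, or is edgeless) the class is tame. The main obstacle, and the part requiring the most care, is organizing this induction on $\ell$ cleanly while correctly tracking how the universal vertex $a$ in $G[N[a]]$ interacts with the forbidden subgraph $K_\ell+P_2$ and invoking Corollary~\ref{cor:tame-co-connected} at each step to keep the pieces co-connected; the number-of-minimal-separators bounds then propagate through Corollaries~\ref{cor:disconnected} and~\ref{cor:co-disconnected} exactly as in the proof of Corollary~\ref{cor:tame-co-connected}.
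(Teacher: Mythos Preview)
Your proposal does not converge to a proof, and the intended induction on $\ell$ has a genuine gap. The step ``in $G[N(a)]$ the forbidden graph $K_\ell+P_2$ becomes effectively $K_{\ell-1}+P_2$'' is false: if $G[N(a)]$ contains an induced $K_{\ell-1}+P_2$, then adding the universal vertex $a$ makes $a$ adjacent to \emph{both} the clique and the $P_2$, so the resulting induced subgraph is not $K_\ell+P_2$. Concretely, the graph obtained from $K_{\ell-1}+P_2$ by adding a universal vertex is $\{P_2+kP_1,K_\ell+P_2\}$-free (for $k\ge 2$, $\ell\ge 2$), yet removing that universal vertex leaves an induced $K_{\ell-1}+P_2$. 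So the inductive hypothesis is simply not available on $G[N(a)]$. Your earlier plan of bounding the maximum co-degree also fails outright: take $G$ on $\{a,b,c_1,\dots,c_n\}$ with $ab\in E$, $\{c_1,\dots,c_n\}$ a clique, $a$ adjacent to $c_2,\dots,c_n$ but not $c_1$, and $b$ adjacent to no $c_i$; this graph is co-connected and $\{P_2+2P_1,K_2+P_2\}$-free, yet $b$ has $n$ non-neighbors.

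The frustrating part is that you already have the right lemma and then walk away from it. Your observation that the common non-neighborhood of the endpoints of any edge has fewer than $R(\ell,k)$ vertices is exactly what the paper uses --- but applied to the separator structure, not to a vertex decomposition. The paper's argument is a one-liner from there: if $S$ is a minimal separator with $S$-full components $C$ and $D$, and both have at least $R(\ell,k)$ vertices, then $D$ contains an edge $ab$ (it is connected with $\ge 2$ vertices), and $V(C)$ lies in the common non-neighborhood of $a$ and $b$, contradicting your bound. Hence $\min(|V(C)|,|V(D)|)<R(\ell,k)$, and since $S=N(V(C))=N(V(D))$, every minimal separator is the neighborhood of a set of size at most $R(\ell,k)-1$, giving $s(G)\le \binom{|V(G)|}{R(\ell,k)-1}$. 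No co-connectedness, no induction, no decomposition is needed.
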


\begin{proof}
We may assume that $k\ge 2$ and $\ell\ge 2$. Then $R(\ell,k)\ge 2$.
Let $G$ be a $\{\2+k\1,K_\ell+\2\}$-free graph. We will prove that for every minimal separator $S$ in $G$, there exists a set $X\subseteq V(G)$ such that $|X|\le R(\ell,k)-1$ and $S = N(X)$. Clearly, this will imply that $G$ has at most ${|V(G)|\choose  R(\ell,k)-1}$ minimal separators. Let $S$ be a minimal separator in $G$ and let $C$ and $D$ be two $S$-full components of $G-S$. Since $N(V(C))  = N(V(D)) = S$, it suffices to show that $|V(C)|\le R(\ell,k)-1$ or $|V(D)|\le R(\ell,k)-1$. Suppose that this is not the case. Then $|V(C)|\ge R(\ell,k)$ and $|V(D)|\ge R(\ell,k)$.
By Ramsey's theorem, this implies that there exists a set $Z\subseteq V(C)$ such that $Z$ is either a clique of size $\ell$ or an independent set of size $k$. But then $Z$ together with a pair of adjacent vertices from $D$
induces either a $K_\ell+\2$ or $\2+kP_1$, respectively.
Both cases lead to a contradiction.
\qed\end{proof}

The next proposition simplifies the cases with $P_3+\1\in \F$. 

\begin{repproposition}{proposition:3p1andF-free}
\label{prop:3p1andF-free}
Let $\F$ be a family of graphs such that
$P_3+\1\in \F$ and let
\hbox{$\F' = (\F\setminus \{P_3+\1\})\cup \{3\1\}$}.
Then the class of $\F$-free graphs is tame if and only if the class of $\F'$-free graphs is tame.
\end{repproposition}

\begin{proof}
Let ${\cal G}$ and ${\cal G}'$ be the classes of $\F$-free and $\F'$-free graphs, respectively. Since $\F'\trianglelefteq \F$, we have that $\mathcal{G}'\subseteq \mathcal{G}$. Hence, if $\mathcal{G}$ is tame, then so is $\mathcal{G}'$. Suppose that $\mathcal{G}'$ is tame. By Lemma~\ref{lem:poly-monomial}, there exists a least integer $k\ge 0$ such that $s(G)\le |V(G)|^k$ for all $G\in \mathcal{G}'$. Let $G\in \mathcal{G}$. By Corollary~\ref{cor:tame-co-connected} we may assume that $G$ is co-connected. Since $G$ is co-connected and $P_3+\1$-free, Theorem~\ref{thm:pawfree} implies that $G$ is either a disjoint union of complete graphs, in which case $s(G)\le 1$, or $G$ is $3\1$-free, in which case $G\in  \mathcal{G}'$ and thus $s(G)\le |V(G)|^k$. Thus, in both cases we have that $s(G)\le |V(G)|^k$. It follows that ${\cal G}$ is tame. \qed
\end{proof}

We now consider various families of forbidden induced subgraphs with at most four vertices. Propositions~\ref{prop:Ramsey} and~\ref{prop:3p1andF-free} can be used to prove the following.

\begin{repproposition}{proposition:k3+p1free}
\label{prop:k3+p1free}
For every $F\in \{4\1$, $\2+2\1$, $P_3+\1$, claw$\}$, the class of $\{F,K_3+\1\}$-free graphs is tame.
\end{repproposition}

\begin{proof}
\begin{enumerate}[i)]
\item The class of $\{4\1,K_3+\1\}$-free graphs is
a subclass of the class of $\{\2+4\1,K_3+\2\}$-free graphs, which is tame by Proposition~\ref{prop:Ramsey}.

\item The class of $\{\2+2\1,K_3+\1\}$-free graphs is
a subclass of the class of $\{\2+2\1,K_3+\2\}$-free graphs, which is tame by Proposition~\ref{prop:Ramsey}.

\item By Proposition~\ref{prop:3p1andF-free}, it suffices to show that the class of $\{3\1,K_3+\1\}$-free graphs is tame. This follows from part i) of the proposition. \qed
\end{enumerate}
\end{proof}

To prove Proposition~\ref{prop:C4p3+p1}, we will need the following result about the structure of $\{3\1,C_4\}$-free graphs.

\begin{theorem}[Choudum and Shalu~\cite{MR2139768}]\label{thm:3k1c4}
If $G$ is a $\{3\1,C_4\}$-free graph, then either $G$ is chordal or
$G$ is isomorphic to the graph \mbox{$C_5(m_1,m_2,m_3,m_4,m_5)\ast K_t$} for some integers $m_i\ge 1$ and $t\ge 0$, where $C_5(m_1,m_2,m_3,m_4,m_5)$ is the graph obtained from $C_5$ with a cyclic order of vertices $v_1,\ldots, v_5$ by replacing each $v_i$ by $K_{m_i}$ and joining every pair of vertices $x\in K_{m_i}$ and $y\in K_{m_j}$ if and only if $v_i$ and $v_j$ are adjacent in the $C_5$.
\end{theorem}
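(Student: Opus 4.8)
I would split the argument according to whether $G$ contains an induced $C_5$. If it does not, then $G$ is $\{3\1, C_4, C_5\}$-free; since $3\1$-freeness means $\alpha(G)\le 2$, every induced cycle of length at least $6$ would contain an independent set of size three, and together with the exclusion of induced $C_4$ and $C_5$ this shows that $G$ has no induced cycle of length at least $4$, i.e.\ $G$ is chordal, and we are in the first case.

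So assume $G$ contains an induced $C_5$, and fix one, with vertices $v_1,\dots,v_5$ taken in cyclic order (indices read mod $5$). The goal is to place every vertex of $G$ either into a set $T$ of vertices complete to $\{v_1,\dots,v_5\}$, or into one of five ``blobs'' $B_1,\dots,B_5$, where $B_j$ consists of the vertices whose \emph{non}-neighbours among $v_1,\dots,v_5$ are exactly $\{v_{j+2},v_{j+3}\}$ (so $v_j\in B_j$, and each $B_j\ne\emptyset$). That no other behaviour is possible uses both forbidden subgraphs: since $\alpha(G)\le 2$ and $C_5$ is triangle-free, a vertex $u\notin\{v_1,\dots,v_5\}$ can miss at most two of the $v_i$, and if it misses exactly two they must be adjacent (otherwise $u$ together with them is an independent triple), hence consecutive; and ``missing exactly one $v_i$'' is impossible, since if $u\not\sim v_1$ and $u\sim v_2,\dots,v_5$ then $\{u,v_1,v_2,v_5\}$ induces a $C_4$. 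One then checks, by a short case analysis modulo $5$ in which each potential violation is completed to either an induced $C_4$ or an independent triple, that $T$ is a clique complete to $V(G)\setminus T$, each $B_j$ is a clique, $B_j$ is complete to $B_{j\pm1}$, and $B_j$ is anticomplete to $B_{j\pm2}$. For instance: if two vertices of $T$ (or of some $B_j$) were non-adjacent they would, together with $v_1,v_3$ (resp.\ $v_{j-1},v_{j+1}$), induce a $C_4$; if a vertex $u\in T$ and $w\in B_j$ were non-adjacent then $\{u,v_{j-1},w,v_{j+1}\}$ would induce a $C_4$; if $w\in B_j$ and $w'\in B_{j+1}$ were non-adjacent then $\{w,w',v_{j+3}\}$ would be an independent triple; and if $w\in B_j$ and $w''\in B_{j+2}$ were adjacent then $\{w,w'',v_{j+3},v_{j-1}\}$ would induce a $C_4$. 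Since every vertex lies in $T\cup B_1\cup\dots\cup B_5$, this is precisely the statement that $G$ is isomorphic to $C_5(|B_1|,\dots,|B_5|)\ast K_{|T|}$ with all $|B_j|\ge 1$ and $|T|\ge 0$, which is the second case.

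\noindent\textbf{Main obstacle and an alternative.}
There is little conceptual content here; the work lies entirely in the bookkeeping of the modulo-$5$ case analysis and in verifying that every vertex has been accounted for. An essentially equivalent route, which I might prefer to write up, is to pass to the complement: $G$ is $\{3\1,C_4\}$-free if and only if $\overline G$ is $\{K_3,2\2\}$-free, i.e.\ triangle-free with no induced $2\2$. Such graphs admit a clean standalone classification: each is a disjoint union of isolated vertices together with at most one further component, which is either complete bipartite or a blow-up of $C_5$ in which the five parts are independent sets (the proof being a shortest-odd-cycle / shortest-path argument). Complementing back, the isolated vertices of $\overline G$ become the universal vertices forming the $K_t$ factor, a complete bipartite component yields $\overline{K_{m,n}}=K_m+K_n$ joined to $K_t$, which is chordal, and a $C_5$-blow-up yields exactly $C_5(m_1,\dots,m_5)\ast K_t$ after reindexing the five parts along the ``distance-two'' cyclic order of $C_5$.
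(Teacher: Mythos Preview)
The paper does not prove this theorem; it is quoted from Choudum and Shalu~\cite{MR2139768} and used as a black box in the proof of Proposition~\ref{prop:C4p3+p1}. So there is no proof in the paper to compare your proposal against.

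Your first (direct) argument is correct. The placement of every vertex into $T$ or one of the $B_j$'s is justified exactly as you say, and each of the adjacency/non-adjacency claims between the blocks is witnessed by the induced $C_4$ or independent triple you exhibit. Since $v_j\in B_j$, all five parts are non-empty, and the structure is precisely $C_5(|B_1|,\dots,|B_5|)\ast K_{|T|}$.

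Your alternative via the complement has a small inaccuracy in the intermediate classification. Connected $\{K_3,2K_2\}$-free graphs are not just ``complete bipartite or a $C_5$-blow-up with independent parts'': the bipartite case should read \emph{chain graph} (connected bipartite $2K_2$-free graph), not complete bipartite. For instance, $P_4$ is connected, triangle-free, and $2K_2$-free, yet it is neither complete bipartite nor a $C_5$-blow-up with all five parts non-empty. Fortunately this does not damage your conclusion: the complement of any chain graph is co-bipartite and $C_4$-free, hence has no induced $C_k$ for any $k\ge 4$ (for $k\ge 5$ the complement $\overline{C_k}$ contains a triangle and so cannot sit inside a bipartite graph), and is therefore chordal. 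So after complementing back and joining with $K_t$, these extra cases are still absorbed by the ``$G$ is chordal'' alternative. If you choose to write up the complement route, replace ``complete bipartite'' by ``bipartite'' (equivalently, ``chain graph'') and add the one-line observation that co-bipartite $C_4$-free graphs are chordal.
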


The next result follows from a structural property of
$\{3\1,C_4\}$-free graphs proved by Choudum and Shalu~\cite{MR2139768}.

\begin{repproposition}{proposition:C4p3+p1}
\label{prop:C4p3+p1}
The class of $\{P_3+\1,C_4\}$-free graphs is tame.
\end{repproposition}

\begin{proof}
Let $G$ be a $\{3\1, C_4\}$-free graph. We will prove that $s(G)\le |V(G)|$.
By Propositions~\ref{prop:universalvertex} and~\ref{prop:truetwin}, we may assume that $G$ has no universal vertices and is true-twin-free.
From Theorem~\ref{thm:3k1c4} it follows that $G$ is either chordal or isomorphic to $C_5$. Hence, using the fact that chordal graphs have less than $|V(G)|$ minimal separators~\cite{MR0408312} and the fact that $s(C_5) = 5$, we infer that $G$ has at most $|V(G)|$ minimal separators. 
\qed
\end{proof}

The next two propositions are proved using a structural analysis of the graphs in respective classes.

\begin{repproposition}{proposition:C4p2+2p1}
\label{prop:C4p2+2p1}
The class of $\{\2+2\1,C_4\}$-free graphs is tame.
\end{repproposition}

\begin{proof}
Let $G$ be a $\{P_2+2\1,C_4\}$-free graph. By Propositions~\ref{prop:universalvertex} and~\ref{prop:truetwin}, we may assume that $G$ has no universal vertices and is true-twin-free.
Let $I$ be a maximum independent set in $G$. If $|I|\le 2$, then $G$ is $\{3\1, C_4\}$-free and  Proposition~\ref{prop:C4p3+p1}
implies that $G$ has a polynomially bounded number of minimal separators in this case.
Thus, in what follows we assume that $|I|\ge 3$. The maximality of $I$ implies that every vertex in $V(G)\setminus I$ has a neighbor in $I$. Let $v\in V(G)\setminus I$ be arbitrary. If $v$ has at least two non-neighbors in $I$, then we have an induced $\2+2\1$ in $G$. It follows that the vertex set of $G$ can be partitioned into three pairwise disjoint sets, $V(G)=I\cup X\cup Y$, where $X=\{v\in V(G)\setminus I \mid |I\setminus N(v)|=1\}$ and $Y=\{v\in V(G)\setminus I  \mid I\subseteq N(v)\}$. Note that every two distinct vertices $x\in X\cup Y$ and $y\in Y$ have two common neighbors in $I$. Since $I$ is independent and $G$ is $C_4$-free, we infer that $X$ and $Y$ are complete to each other
and $Y$ is a clique. It follows that vertices in $Y$ are universal in $G$.
However, $G$ has no universal vertices, which implies $Y = \emptyset$.

Suppose that $|I| \ge 4$. Then any two vertices in $X$ have two common neighbors in $I$ and are therefore adjacent to each other, since otherwise $G$ would contain an induced $C_4$. It follows that $X$ is a clique and so $G$ is a split graph, with a split partition $(X,I)$. Consequently, we infer
that $G$ is chordal and hence has less than $|V(G)|$ minimal separators~\cite{MR0408312}.
 
It remains to consider the case that $|I| = 3$. We consider two cases depending on whether $G$ contains an induced $C_6$ or not.
\medskip

\textit{Case 1: $C_6\subseteq_i G$.}


Let $C$ be an induced $6$-cycle in $G$, with a cyclic order of vertices $v_1,\dots,v_6$. Throughout the proof of this case, we consider indices modulo $6$. We now analyze the neighborhood of an arbitrary vertex $v\in V(G)\setminus V(C)$ in $C$. The sets $I_1 = \{v_1,v_3,v_5\}$ and $I_2 = \{v_2,v_4,v_6\}$ are maximum independent sets in $G$ and hence the same arguments as we used above for set $I$ imply that
for $j\in \{1,2\}$, vertex $v$ has exactly two neighbors in $I_j$.
By symmetry, we may assume that $v$ is adjacent to $v_1$ and $v_3$ but not to $v_5$. Since $G$ is $C_4$-free, we infer that $v$ is adjacent to $v_2$.
But now, using the symmetry and the fact that $v$ has exactly two neighbors in $\{v_2,v_4,v_6\}$, we may assume that $v$ is adjacent to $v_4$ but not to $v_6$.
It follows that the only possibility for an arbitrary vertex $v\in V(G)\setminus V(C)$ is that $N(v)\cap V(C)$ contains exactly four consecutive vertices on the cycle. Thus, setting for all $i\in \{1,\dots,6\}$
$$A_i=\{v\in V(G)\setminus V(C)\mid N(v)\cap V(C)=\{v_i,v_{i+1},v_{i+2}, v_{i+3}\}\}\,,$$
we infer that the vertex set of $G$ can be partitioned as $V(G)= V(C)\cup A_1\cup\ldots \cup A_6$. Moreover, every set $A_i$ is a clique, since otherwise two non-adjacent vertices in $A_i$ together with vertices $v_i, v_{i+2}$ would form an induced $C_4$ in $G$. Next, we examine the adjacencies between the sets $A_i$ and $A_j$ for $i,j\in \{1,\dots,6\}$, $i\neq j$, in $G$.
Fix $i\in \{1,\ldots, 6\}$ and let $v\in A_i$ and $w\in A_{i+1}$ be arbitrary. If $v$ is adjacent to $x$, then there is an induced $C_4$ with vertex set $\{v,v_{i}, w, v_{i+3}\}$ in $G$. It follows that any two cliques $A_i$ and $A_{i+1}$ are complete to each other in $G$. Now let $x\in A_{i+2}$ be arbitrary. If $v$ is non-adjacent to $z$, then $G$ contains an induced $C_4$ with vertex set $\{v,v_i,v_{i+5},x\}$. This implies that $A_i$ and $A_{i+2}$ are anticomplete to each other in $G$. Finally, let $y\in A_{i+3}$. If $v$ is not adjacent to $y$, then $G$ contains an induced $C_4$ with vertex set $\{v,v_i,y,v_{i+3}\}$. Thus any two cliques $A_i$ and $A_{i+3}$ are complete to each other in $G$. We have thus showed that for every $i\in \{1,\dots,6\}$, any two vertices in $A_i$ are true twins in $G$. Since $G$ is true-twin-free, this implies that $|A_i|\le 1$ for all $i\in \{1,\dots,6\}$ and consequently $G$ has at most $12$ vertices.
Thus, $G$ has at most a constant number of minimal separators in this case.

\medskip
\textit{Case 2: $G$ is $C_6$-free.}

Recall that $I$ is a maximum independent set in $G$ with $|I| = 3$ and
$|I\setminus N(v)|=1$ for all $v\in V(G)\setminus I$. Let $I = \{v_1,v_2,v_3\}$ and let $X_j = \{v\in V(G)\setminus I\mid N(v)\cap I = I\setminus\{v_j\}\}$ for all $j\in \{1,2,3\}$. Then, for every $j\in \{1,2,3\}$, we have that $X_j$ is a clique in $G$, since otherwise we have an induced $C_4$ in $G$. We claim that for each $i\in \{1,2,3\}$, the graph $G-v_i$ is $3\1$-free. By symmetry it suffices to consider the case $i = 1$. Suppose for a contradiction that $G-v_1$ is not $3\1$-free and let $J$ be an independent set of size three in $G-v_1$. If $v_2\in J$, then $J\setminus \{v_2\}$ is a subset of the clique $X_1\cup\{v_3\}$, which implies $|J|\le 2$. Hence $v_2\not\in J$ and by symmetry $v_3\not\in J$.
Since all sets $X_j$ for $j\in \{1,2,3\}$ are cliques, we infer that $J = \{x_1,x_2,x_3\}$ with $x_j\in X_j$ for all $j\in \{1,2,3\}$. But then $G$ contains an induced $C_6$ on the vertex set $\{v_1,x_2,v_3,x_1,v_2,x_3\}$, a contradiction. This shows that for each $i\in \{1,2,3\}$, the graph $G-v_i$ is $3\1$-free and hence $\{3\1,C_4\}$-free. By Proposition~\ref{prop:C4p3+p1} and its proof, $G-v_i$ has at most $|V(G-v_i)|= |V(G)|-1$ minimal separators.

Note that
$$\Sep_G=\left(\bigcup_{1\le i<j\le 3}\Sep_G(v_i,v_j)\right)
 \cup \left(\bigcup_{i = 1}^3\bigcup_{x\in X_i}\Sep_G(v_i,x)\right)
 \cup \left(\bigcup_{1\le i<j\le 3}\bigcup_{\substack{x\in X_i,y\in X_j\\xy\not\in E(G)}}\Sep_G(x,y)\right)\,,$$
since these are the only possibilities of choosing two non-adjacent vertices in $G$. We will now bound the cardinalities of the sets or the right side of the above equality.

First, let $i,j\in \{1,2,3\}$ such that $i<j$ and let $k\in \{1,2,3\}\setminus\{i,j\}$. It is not difficult to see that if $S\in
\Sep_G(v_i,v_j)$, then $S = X_k\cup Z$, where
$Z\in \{X_i, X_j\}\cup\{S\cup \{v_k\}\mid S\in \Sep_{G[X_i\cup X_j]}\}$.
Since the graph $G[X_i\cup X_j]$ is an induced subgraph of $G-v_k$,
we obtain, using Proposition~\ref{prop:monotonicity},
that $s(G[X_i\cup X_j])\le s(G-v_k)\le |V(G)|-1$.
This implies $|\Sep_G(v_i,v_j)|\le s(G[X_i\cup X_j])+2\le s(G-v_k)+2\le |V(G)|+1$.

\begin{sloppypar}
Next, we consider sets of the form $\Sep_G(v_k,x)$ where $x\in X_k$.
For all $S\in \Sep_{G-\{v_i,v_j\}}(v_k,x)$, we let
$f(S)= \{q\in \{i.j\} \mid X_q\nsubseteq S\}$ and
$\phi(S)=S\cup \{v_q\mid q\in f(S)\}$.
It is not difficult to see that $\phi$ is a function from $\Sep_{G-\{v_i,v_j\}}(v_k,x)$ to $\Sep_G(v_k,x)$ such that for each $Z\subseteq \{i,j\}$, function $\phi$ maps the set $\{S\in \Sep_{G-\{v_i,v_j\}}(v_k,x)\mid f(S)= Z\}$ bijectively to the set
\hbox{$\{S\in \Sep_G(v_k,x)\mid \{q\in \{i,j\}\mid v_q\in S\} = Z\}$}.
It follows that $\phi$ is bijective and thus, using also Proposition~\ref{prop:monotonicity}, we infer that
$|\Sep_G(v_k,x)| = |\Sep_{G-\{v_i,v_j\}}(v_k,x)|\le s(G-\{v_i,v_j\})\le
s(G-v_i)\le |V(G)|-1$.
\end{sloppypar}

Finally, we consider sets of the form $\Sep_G(x,y)$ where $x\in X_i$ and $y\in X_j$ with $xy\not\in E(G)$. In this case, the function $\phi': \Sep_{G-v_k}(x,y)\rightarrow \Sep_G(x, y)$ given by the rule $\phi'(S)=S\cup \{v_k\}$ for all $S\in \Sep_{G-v_k}(x,y)$ is a bijection. Consequently, $|\Sep_G(x,y)|\le s(G-v_k)\le |V(G)|-1$.

Since the value of $s(G)$ is not larger than the sum of the cardinalities of the at most ${|V(G)|\choose 2}$ sets of minimal $a,b$-separators, over all non-adjacent vertex pairs $a,b$, we infer that $G$ has at most
${|V(G)|\choose 2}\cdot (|V(G)|+1)$ minimal separators.
This settles Case 2 and completes the proof. \qed
\end{proof}

\begin{repproposition}{proposition:4P1C4diamond}
\label{prop:4P1C4diamond}
The class of $\{4\1, C_4$, diamond$\}$-free graphs is tame.
\end{repproposition}

\begin{proof}
Let $G$ be a $\{4\1, C_4$, diamond$\}$-free graph.
By Proposition~\ref{prop:truetwin} and Corollary~\ref{cor:tame-disconnected}, we may assume that $G$ is connected and true-twin-free. Since $G$ is $4P_1$-free, it has no induced cycles of length more than $7$. So the only possible cycles in $G$ are $C_3,C_5,C_6,C_7$. We may assume that $G$ contains an induced $C_k$ for some $k\in \{5,6,7\}$, since otherwise $G$ is chordal and has a polynomially bounded number of minimal separators~\cite{MR0408312}.
We will show that $G$ has at most $14$ vertices, which will imply that $G$ has at most a constant number of minimal separators. We consider three exhaustive cases depending on which of the cycles $C_5$, $C_6$, or $C_7$ exists in $G$.

\medskip

\textit{Case 1: $G$ contains an induced cycle of length $6$.}

Let $C$ be an induced cycle of length $6$ in $G$ and let its vertices be denoted as $v_1,\ldots, v_6$ in cyclic order. Throughout this case, all indices of vertices in $C$ will be considered modulo $6$. We first analyze the possible neighborhoods in $C$ of vertices not in $C$. Since $G$ is $4P_1$-free, every vertex $v\in V(G)\setminus V(C)$ has a neighbor in $C$. We may thus assume without loss of generality that $v$ is adjacent to $v_1$. Then, $v$ is not adjacent to $v_3$ since otherwise $G$ would contain either an induced diamond (if $v$ is adjacent to $v_2$) or an induced $C_4$ (otherwise). By symmetry, we infer that $v$ is not adjacent to $v_5$. If $v$ is not adjacent to any of $\{v_2,v_4,v_6\}$, then $\{v, v_2,v_4,v_6\}$ would form an induced $4\1$ in $G$, which is not possible. Moreover, to avoid an induced copy of the diamond or of the $C_4$, we infer that $v$ is adjacent to exactly one vertex in
$\{v_2,v_4,v_6\}$. By symmetry, we may assume that $v$ is adjacent either to $v_2$ or to $v_4$.  We have shown that for every vertex $v\in V(G)\setminus V(C)$, we have that either $N(v)\cap V(C)=\{v_i,v_{i+1}\}$ or $N(v)\cap V(C)=\{v_i,v_{i+3}\}$, for some $i\in \{1,\dots,6\}$. Vertices $v\in V(G)\setminus V(C)$ satisfying the condition $N(v)\cap V(C)=\{v_i,v_{i+1}\}$ for some $i\in \{1,\ldots, 6\}$ will be said to be of \textit{type $1$}, while the others will be of \textit{type $2$}.

We claim that $G$ has at most one vertex of type $2$. Suppose this is not the case, and let $u,v\in V(G)\setminus V(C)$ be two distinct vertices of type $2$. By symmetry, we may assume that $N(u)\cap V(C)=\{v_1,v_{4}\}$ and that $N(v)\cap V(C)=\{v_i, v_{i+3}\}$ for some $i\in \{1,2\}$. If $i = 1$, then the vertex set $\{u,v_1,v,v_4\}$ would induce either a $C_4$ (if $u$ and $v$ are non-adjacent) or a diamond in $G$ (otherwise). Similarly, if $i=2$, then
$G$ contains either an induced $4\1$ with vertex set $\{u,v,v_3,v_6\}$ (if $u$ and $v$ are non-adjacent) or an induced $C_4$ with vertex set $\{u,v_1,v_2,v\}$ (otherwise). Since either case leads to a contradiction, we conclude that $G$ has at most one vertex of type $2$, as claimed.

Let us now analyze the possible adjacencies between pairs of vertices of type $1$. Let us denote by $A_i$ the set of type $1$ vertices $v$ with
$N(v)\cap V(C) =\{v_i, v_{i+1}\}$, for $i\in \{1,\dots,6\}$.
Let us consider two type $1$ vertices, say $u,v\in V(G)\setminus V(C)$ with $u\in A_1$ and $v\in A_i$ for some $i\in \{1,\dots,6\}$. If $i=1$, then $u$ and $v$ are adjacent in $G$, since otherwise we have an induced diamond. If $i=2$, then $N(u)\cap N(v)\cap V(C)=\{v_2\}$. Then, $u$ is non-adjacent to $v$ since otherwise $G$ would contain a diamond induced by the vertex set $\{v_1,v_2,u,v\}$. But now, $G$ contains an induced $4\1$ with vertices $u,v,v_4,v_6$. Thus, $i\neq 2$ and by symmetry, $i\neq 6$. Next, if $i=3$, we have that $u$ and $v$ are not adjacent in $G$, since otherwise the vertex set $\{u,v_2,v_3,v\}$ would induce a $C_4$. Finally, if $i=4$, then vertices $u$ and $v$ are adjacent in $G$, since otherwise $G$ would contain an induced $4\1$ with vertices $u,v,v_3,v_6$.

We split the rest of the proof of Case 1 into two subcases depending on whether a vertex of type $2$ exists or not. Suppose first that there exists a vertex $u$ of type $2$, with (without loss of generality) $N(u)\cap V(C)=\{v_1, v_{4}\}$. We claim that $A_i= \emptyset$ for all $i\in \{1,3,4,6\}$. By symmetry, it suffices to consider the case $i = 1$. Suppose for a contradiction that $A_1\neq\emptyset$ and let $v\in A_1$. Then, $G$ contains either an induced $4\1$ with vertex set $\{u,v,v_3,v_6\}$ (if $u$ and $v$ are non-adjacent) or an induced diamond with vertex set $\{u,v,v_1,v_2\}$ (otherwise). Thus,
$A_1=A_3=A_4=A_6=\emptyset$, as claimed.
Moreover, the above adjacency analysis implies that we have that $A_2$ and $A_5$ are cliques that are complete to each other.
Moreover, if $v\in A_2\cup A_5$, then $u$ is not adjacent to $v$, as otherwise $G$ would contain an induced $C_4$ with vertex set $\{u,v_1,v_2,v\}$ (if $i =2$) or $\{u,v_4,v_5,v\}$ (if $i =5$).
Clearly, for $i\in\{2,5\}$, any two vertices in $A_i$ are true twins in $G$.
Since $G$ is true-twin-free, we infer that $|A_2|\le 1$ and
$|A_5|\le 1$. It follows that $G$ has at most $9$ vertices.

Suppose now that there is no vertex of type $2$. In this case, we may assume that if some $A_i$ is non-empty, then $A_1\neq \emptyset$. Thus, the above analysis implies that $A_2=A_6=\emptyset$ and that either $A_4= \emptyset$
or $A_3 = A_5 = \emptyset$. Moreover, any two vertices in some set $A_i$ are
true twins, which, since $G$ is true-twin-free, implies that $|A_i|\le 1$ for all  $i\in \{1,\ldots, 6\}$. It follows that $G$ has at most $9$ vertices, which completes the proof for Case 1.
\medskip

\textit{Case 2: $G$ is $C_6$-free but contains an induced cycle of length $7$.}

Let $C$ be an induced $C_7$ in $G$, with a cyclic order of vertices $v_1,\ldots, v_7$. All indices $v_i$ will be considered modulo $7$.
Let $v\in V(G)\setminus V(C)$. With a similar analysis as in Case 1, we get that $N(v)=\{v_i,v_{i+1}, v_{i+4}\}$ for some $i\in \{1,\ldots,7\}$.
Let $A_i=\{v\in V(G)\setminus V(C) \mid N(v)\cap V(C)=\{v_i,v_{i+1},v_{i+4}\}\}$ for all $i\in \{1,\dots, 7\}$.
We have $|A_i|\le 1$ since if $u,v\in A_i$ with $u\neq v$, then the vertex set $\{u,v_i,v,v_{i+4}\}$ induces either a diamond or a $C_4$ in $G$ (depending on whether $u$ and $v$ are adjacent or not). It follows that $G$ has at most $14$ vertices, which completes the proof for Case 2.

\medskip

\textit{Case 3: $G$ is $\{C_6,C_7\}$-free but contains an induced cycle of length $5$.}

Let $C$ be an induced $C_5$ in $G$ with a cyclic order of vertices $v_1,\ldots, v_5$ (indices modulo $5$). We may assume that $G\neq C$.
Since $G$ is connected, there is a vertex $v\in V(G)\setminus V(C)$ with a neighbor in $C$. The fact that $G$ is $C_4$-free and diamond-free implies that $N(v)\cap V(C)$ is a clique. Thus, in particular, $|N(v)\cap V(C)|=1$ or $|N(v)\cap V(C)|=2$. For $i\in \{1,2\}$, we will say that a vertex $v\in N(V(C))$ is of \emph{type $i$} if $|N(v)\cap V(C)|=i$. For all $i\in \{1,\ldots, 5\}$, let $A_i=\{v\in V(G)\setminus V(C)\mid N(v)\cap V(C)=\{v_i\}\}$ and $B_i = \{v\in V(G)\setminus V(C)\mid N(v)\cap V(C)=\{v_i,v_{i+1}\}\}$. Since $G$ is $4\1$-free, $A_i$ is a clique for all $i\in \{1,\ldots, 5\}$. Moreover, since $G$ is diamond-free, $B_i$ is a clique for all $i\in \{1,\ldots, 5\}$. Let $Z = V(G)\setminus (V(C)\cup N(C))$. Since $G$ is $4\1$-free, $Z$ is a clique. Moreover, if $u\in Z$ and $v\in N(C)$, then $u$ is adjacent to $v$, since otherwise the set consisting of $u$, $v$, and two non-adjacent vertices in $V(C)\setminus N(v)$ would induce a $4\1$ in $G$. It follows that any two vertices in $Z$ are true twins. Since $G$ is true-twin-free, this implies that $|Z|\le 1$.

We claim that at most one of the sets $A_i$ is non-empty. Suppose that $A_1\neq \emptyset$. Let $u\in A_1$. We claim that $A_2 = A_3 = A_4 = A_5=\emptyset$. By symmetry, it suffices to show that
$A_2 = A_3 =\emptyset$. If there exists a vertex $v\in A_2$, then $G$ contains either an induced $C_4$ with vertex set $\{u,v,v_1,v_{2}\}$ (if $u$ and $v$ are adjacent) or an induced $4\1$ with vertex set $\{u,v,v_3,v_{5}\}$ (otherwise). Hence $A_2=\emptyset$. If there exists a vertex $v\in A_3$,
then $G$ contains either an induced $C_6$ with vertex set $\{u,v,v_3,v_4,v_5,v_1\}$ (if $u$ and $v$ are adjacent)
or an induced $4\1$ with vertex set $\{u,v,v_2,v_{4}\}$ (otherwise).
Hence $A_3=\emptyset$.

We show next that for all $i,j\in \{1,\ldots, 5\}$, $i\neq j$, sets $B_i$ and $B_{j}$ are anticomplete to each other. By symmetry, it suffices to consider the cases $(i,j) = (1,2)$ and $(i,j) = (1,3)$. If $u\in B_1$ and $v\in B_{2}$, then $u$ and $v$ are non-adjacent to each other, since otherwise $G$ would contain an induced diamond with vertex set $\{u,v,v_1,v_{2}\}$. Moreover, if $u\in B_1$ and $v\in B_{3}$, then $u$ and $v$ are non-adjacent to each other, since otherwise $G$ would contain an induced $C_4$ with vertex set $\{u,v_{2},v_{3},v\}$.

Suppose that $A_i = \emptyset$ for all $i\in \{1,\ldots, 5\}$. Then, the above analysis implies that for every $j\in \{1,\ldots, 5\}$, any two vertices in a set $B_j$ are true twins. Hence $|B_j|\le 1$ for all $j$, which implies that $G$ has at most 11 vertices. Finally, suppose that some $A_i$ is non-empty. By symmetry, we may assume that $A_1\neq \emptyset$. Let $u\in A_1$. We claim that $B_1 = \emptyset$. If not, say $v\in B_1$, then $G$ contains either an induced diamond with vertex set $\{u,v,v_1,v_{2}\}$ (if $u$ is adjacent to $v$) or an induced $4\1$ with vertex set $\{u,v,v_{3},v_{5}\}$ (otherwise). Thus $B_1= \emptyset$ and by symmetry $B_5 = \emptyset$. If $v\in B_2$, then $u$ is non-adjacent to $v$, since otherwise
$G$ would contain an induced $C_4$ with vertex set $\{u,v_1,v_{2},v\}$.
If $v\in B_3$, then $u$ is adjacent to $v$, since otherwise
$G$ would contain an induced $4\1$ with vertex set $\{u,v,v_2,v_{5}\}$.
By symmetry, this shows that for every $X\in \{A_1,B_2,B_3,B_4\}$, any two vertices in $X$ are true twins in $G$. Since $G$ is true-twin-free, this implies that $G$ has at most 10 vertices.
This completes the proof. \qed
\end{proof}

Propositions~\ref{prop:Ramsey} and~\ref{prop:3p1andF-free} can be used to prove the following.

\begin{repproposition}{proposition:k4free}
\label{prop:k4free}
For every $F\in \{4\1,\2+2\1,P_3+\1\}$, the class of $\{F, K_4\}$-free graphs is tame.
\end{repproposition}

\begin{proof}
\begin{enumerate}[i)]
\item By Ramsey's theorem, the class of $\{4\1,K_4\}$-free graph consists of finitely many graphs, so it is tame.

\item The class of $\{4\1,K_4\}$-free graphs is
a subclass of the class of $\{\2+4\1,K_4+\2\}$-free graphs, which is tame by Proposition~\ref{prop:Ramsey}.

\item By Proposition~\ref{prop:3p1andF-free}, it suffices to show that the class of $\{3\1,K_4\}$-free graphs is tame. This follows from Ramsey's theorem.\qed
\end{enumerate}
\end{proof}

\begin{proposition}
\label{prop:pawfree}
For every $F\in \{4\1$, $\2+2\1$, $P_3+\1$, claw$\}$ the class of $\{F, \textrm{paw}\}$-free graphs is tame.
\end{proposition}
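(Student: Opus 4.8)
The plan is to reduce to connected graphs using Corollary~\ref{cor:tame-disconnected} (the class of $\{F,\textrm{paw}\}$-free graphs is hereditary), and then invoke Olariu's structure theorem, Theorem~\ref{thm:pawfree}: a connected $\{F,\textrm{paw}\}$-free graph $G$ is either complete multipartite or $K_3$-free. The complete multipartite case can be handled once and for all, independently of $F$. Indeed, such a $G$ is the join of edgeless graphs $\overline{K_{n_1}},\ldots,\overline{K_{n_k}}$; if $k=1$ then connectedness forces $G=P_1$ and $s(G)=0$, while if $k\ge 2$ the complement of $G$ is disconnected with co-components $\overline{K_{n_1}},\ldots,\overline{K_{n_k}}$, so Corollary~\ref{cor:co-disconnected} gives $s(G)=\sum_{i=1}^k s(\overline{K_{n_i}})$. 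Each $\overline{K_{n_i}}$ is a disjoint union of copies of $P_1$, so by Corollary~\ref{cor:disconnected} it has at most one minimal separator; hence $s(G)\le k\le |V(G)|$.

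It then remains to show, for each of the four choices of $F$, that the class of $\{F,K_3\}$-free graphs is tame, and for three of them this follows from results already in hand. For $F=4\1$, a $\{4\1,K_3\}$-free graph contains neither a clique of size $3$ nor an independent set of size $4$, so by Ramsey's theorem it has fewer than $R(3,4)$ vertices; the class is therefore finite, hence tame. For $F=\2+2\1$, note that a $K_3$-free graph is in particular $(K_3+\1)$-free, so the class of $\{\2+2\1,K_3\}$-free graphs is contained in the class of $\{\2+2\1,K_3+\1\}$-free graphs, which is tame by Proposition~\ref{prop:k3+p1free}; Observation~\ref{obs:trivial} then finishes this case. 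For $F=P_3+\1$, Proposition~\ref{prop:3p1andF-free} reduces the tameness of the class of $\{P_3+\1,K_3\}$-free graphs to that of the class of $\{3\1,K_3\}$-free graphs, which again is finite (hence tame) by Ramsey's theorem.

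The only value of $F$ that calls for a genuinely new argument is $F=\textrm{claw}$. Here I would first observe that a $\{\textrm{claw},K_3\}$-free graph has maximum degree at most $2$: a vertex with three neighbors would, by triangle-freeness, have three pairwise non-adjacent neighbors and thus be the center of an induced claw. Consequently every connected $\{\textrm{claw},K_3\}$-free graph is a path or a cycle, and since $C_3$ is forbidden it is a path or a cycle of length at least four. A short argument based on Lemma~\ref{lem:characterization-minimal-separators} shows that in a path or a cycle every minimal separator consists of at most two vertices, so such a graph $D$ satisfies $s(D)\le |V(D)|^2$; applying Corollary~\ref{cor:tame-disconnected} once more yields tameness of the whole class. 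I do not expect a real obstruction anywhere in the proof; the two points that need a little care are the bookkeeping in the complete multipartite case (making sure each edgeless co-component contributes at most one minimal separator) and the verification of the crude quadratic bound on the number of minimal separators of a path or a cycle in the claw case.
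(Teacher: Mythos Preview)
Your proposal is correct and follows the same skeleton as the paper: reduce to connected graphs via Corollary~\ref{cor:tame-disconnected}, apply Olariu's theorem (Theorem~\ref{thm:pawfree}), and treat the complete multipartite and $K_3$-free cases separately. The paper dispatches both cases more economically than you do. For complete multipartite graphs it simply observes that they are $P_4$-free and invokes Theorem~\ref{thm:cographs}, avoiding your explicit join/co-component bookkeeping. For the $K_3$-free case it notes in one line that $K_3$-free implies $K_4$-free and cites Proposition~\ref{prop:k4free}, rather than handling each $F$ individually. That said, Proposition~\ref{prop:k4free} as stated only covers $F\in\{4\1,\2+2\1,P_3+\1\}$ and not the claw, so the paper's proof is, strictly read, incomplete for $F=\textrm{claw}$; your observation that a $\{\textrm{claw},K_3\}$-free graph has maximum degree at most two (hence every connected such graph is a path or a cycle of length at least four, with $O(n^2)$ minimal separators) is a clean way to plug that gap.
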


\begin{proof}
Let $G$ be an $\{F, \textrm{paw}\}$-free graph.
By Corollary~\ref{cor:tame-disconnected}, we may assume that $G$ is connected. Theorem~\ref{thm:pawfree} implies that $G$ is either $K_3$-free, or complete multipartite. If $G$ is $K_3$-free, then $G$ is also $K_4$-free and Proposition~\ref{prop:k4free} implies that $G$ has a polynomially bounded number of minimal separators.
If $G$ is complete multipartite, then $G$ is $P_4$-free, and thus has a polynomially bounded number of minimal separators by Theorem~\ref{thm:cographs}. It follows that the class of $\{F$, paw$\}$-free graphs is tame.
\qed\end{proof}

In the following we give a lemma that can be used for the proof that the class of $2\2$-free graphs is tame. 

\begin{lemma}\label{lem:2\2-free-separators}
Let $G$ be a $2\2$-free graph and let $S$ be a minimal separator in $G$.
Then there exists a vertex $v\in V(G)$ such that $S = N(v)$.
\end{lemma}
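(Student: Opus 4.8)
The plan is to invoke the characterization of minimal separators from Lemma~\ref{lem:characterization-minimal-separators}. Since $S$ is a minimal separator in $G$, the graph $G-S$ has at least two $S$-full components; fix two of them and call them $C$ and $D$. The first point to record is that $C$ and $D$ are \emph{anticomplete to each other in $G$} (not merely in $G-S$): any edge of $G$ joining a vertex of $C$ to a vertex of $D$ would, since neither of its endpoints lies in $S$, survive in $G-S$ and place the two endpoints in the same component of $G-S$, a contradiction.

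Next I would argue that at least one of $C$, $D$ consists of a single vertex. Suppose not, so both contain at least two vertices. As $C$ and $D$ are connected, each contains an edge, say $c_1c_2\in E(C)$ and $d_1d_2\in E(D)$. By the previous paragraph there are no edges between $\{c_1,c_2\}$ and $\{d_1,d_2\}$, so the four vertices $c_1,c_2,d_1,d_2$ induce in $G$ exactly the two disjoint edges $c_1c_2$ and $d_1d_2$; that is, $G$ contains an induced $2\2$, contrary to hypothesis. Hence one of the two components, say $D$, consists of a single vertex $d$. (Note this also handles the degenerate possibility $S=\emptyset$ with no extra work, since then the relevant components are simply the components of $G$.)

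Finally, I would check that $N(d)=S$, so that $v=d$ works. The inclusion $N(d)\subseteq S$ holds because $\{d\}$ is a component of $G-S$: the vertex $d$ has no neighbour in $V(G)\setminus(S\cup\{d\})$, and $d\notin N(d)$. The reverse inclusion $S\subseteq N(d)$ holds because $D=\{d\}$ is $S$-full: by definition every vertex of $S$ has a neighbour in $D$, which forces it to be adjacent to $d$.

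There is no genuine obstacle here; the only care needed is in the easy bookkeeping — that components are nonempty, that the case $S=\emptyset$ requires no separate treatment, and that the anticompleteness of $C$ and $D$ is established \emph{in $G$} so that the four chosen vertices really do induce a $2\2$.
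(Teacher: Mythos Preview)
Your proof is correct and follows essentially the same approach as the paper: use Lemma~\ref{lem:characterization-minimal-separators} to get two $S$-full components, observe that if both have at least two vertices then a pair of edges from them forms an induced $2\2$, and conclude that one component is a single vertex whose neighborhood is exactly $S$. Your write-up is simply a bit more explicit about the anticompleteness in $G$ and the degenerate case $S=\emptyset$.
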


\begin{proof}
By Lemma~\ref{lem:characterization-minimal-separators}, graph $G-S$ has two $S$-full components $C$ and $D$. If both $C$ and $D$ have at least two vertices, then each of them contains at least one edge. Since $C$ and $D$ are anticomplete to each other, these edges form a $2\2$ in $G$. We may thus assume, by symmetry, that $C=\{v\}$ for some $v\in V(G)$. Then it follows that $N(v)\subseteq S$ and since every vertex of $S$ is adjacent to $v$, we must have $S=N(v)$, as claimed.
\qed\end{proof}

\begin{repproposition}{proposition:2p2}
\label{prop:2p2}
The class of $2\2$-free graphs is tame.
\end{repproposition}

\begin{proof}
Immediate from Lemma~\ref{lem:2\2-free-separators}.
\qed\end{proof}

We now have all the ingredients ready to prove Theorem~\ref{thm:main}.

\repeattheorem{theorem:main}

\begin{proof}
Let $\F$ be a family of graphs on at most $4$ vertices such that
$\F\neq \{C_4,4\1\}$ and $\F\neq \{4\1$, $C_4$, claw$\}$.
If $\F'$ is a family of graphs satisfying one of the conditions
$i)-\ref{item-last})$, then the class of $\F'$-free graphs is tame by Theorem~\ref{thm:cographs} and Propositions~\ref{prop:k3+p1free},~\ref{prop:C4p3+p1},~\ref{prop:C4p2+2p1},~\ref{prop:pawfree},~\ref{prop:4P1C4diamond},~\ref{prop:k4free}, and~\ref{prop:2p2}. Thus, if $\F\trianglelefteq \F'$ for some family of graphs satisfying one of the conditions $\ref{item1})-\ref{item-last})$, then the class of $\F$-free graphs, being a subclass of the tame class of $\F'$-free graphs, is tame, too.

Suppose now that for all families $\F'$ in $\ref{item1})-\ref{item-last})$ we have $\F\not\trianglelefteq \F'$. We want to prove that the class of $\F$-free graphs is not tame. Since  $\F\not\trianglelefteq \{2\2\}$ and
$\F\not\trianglelefteq \{P_4\}$, it follows that if $F\subseteq_i2\2$ or
$F\subseteq_i P_4$, then $F\notin \F$. Let $A=\{K_3$, $C_4$, $K_3+\1$, paw, diamond, $K_4\}$, $B=\{3\1$, $4\1$, $P_2+2\1$, $P_3+\1$, claw$\}$.
Since $\F$ does not contain any induced subgraph of either $2\2$ or $P_4$, we infer that $\F\subseteq A\cup B$. Since Proposition~\ref{prop:forestcycle} implies that the class of $\F$-free graphs is not tame if all graphs in $\F$ contain cycles or all of them are acyclic, we may assume that $\F$ contains two graphs $F_1$ and $F_2$ such that $F_1$ contains a cycle and $F_2$ is acyclic. Clearly, $F_1\in A$ and $F_2\in B$.

We claim that $\F\cap \{K_3$, $K_3+\1$, paw$\} = \emptyset$. Indeed, if
$F\in \{K_3$, $K_3+\1$, paw$\}$, then $\{F,F_2\}\subseteq \F$, which implies that $\F\trianglelefteq \F'$ for $\F'=\{F'$, $F''\}$ where $F'\in \{4\1,$  $\2+2\1$, $P_3+\1,$ claw$\}$ and $F''\in \{$paw, $K_3+\1\}$, contrary to the assumptions on $\F$. It follows that $F_1\in \F\cap A\subseteq \{K_4$, $C_4$, diamond$\}$.

Suppose that $K_4\in \F$. If there exists a graph $F\in \F\cap \{3\1, 4\1,\2+2\1, P_3+\1\}$, then $\F\trianglelefteq \F'$ where $\F'$ satisfies condition $iv)$, a contradiction. It follows that $F_2\in \F\cap B\subseteq \{$claw$\}$, that is, $F_2$ is the claw. We also have $\F\setminus \{K_4$, claw$\}\subseteq \{C_4$, diamond$\}$. Consequently, $\{$claw, $K_4$, $C_4$, diamond$\}\trianglelefteq\F$. By Corollary~\ref{cor:wallcontainedinf}, the class of $\{$claw, $K_4$, $C_4$, diamond$\}$ is not tame and hence by Observation~\ref{obs:trivial}, neither is the class of $\F$-free graphs.

From now on, we assume that $K_4\not\in \F$. Suppose that $C_4\in \F$. If $\{3\1, \2+2\1, P_3+\1\}\cap \F\neq \emptyset$, then $\F\trianglelefteq \F'$ where $\F'$ satisfies condition $v)$, a contradiction. It follows that $F_2\in \F\cap B\subseteq \{4\1$, claw$\}$. Suppose first that $4\1\in \F\cap B$. If the diamond is not in $\F$, then $\F\neq \{4\1, C_4\}$ or $\F\neq \{4\1$, claw, $C_4\}$, which is impossible. Thus, the diamond is in $\F$, which implies that $\{4\1, C_4$, diamond$\}\subseteq \F$, hence
$\F\trianglelefteq \F'$ where $\F'$ satisfies condition $vi)$, a contradiction. We conclude that $4\1\not\in \F$, which implies that $\F\cap B = \{$claw$\}$. Consequently, $\F
\subseteq \{$claw, $C_4$, diamond$\}$, which implies that $\{$claw, $K_4$, $C_4$, diamond$\}\trianglelefteq\F$. By Corollary~\ref{cor:wallcontainedinf}, the class of $\{$claw, $K_4$, $C_4$, diamond$\}$ is not tame and hence by Observation~\ref{obs:trivial}, neither is the class of $\F$-free graphs.

From now on, we assume that $C_4\not\in \F$. It follows that
$F_1\in \F\cap A\subseteq \{$diamond$\}$, that is, $\F\cap A = \{$diamond$\}$. Clearly, $F_2\in \F\cap B \subseteq \{3\1$, $4\1$, $\2+2\1$, $P_3+\1$, claw$\}$, which implies that
every graph in $\F\cap B$ contains an induced $3\1$.
Consequently, $\{3\1$, diamond$\}\trianglelefteq \F$. From  Corollary~\ref{cor:h2containedinf} it follows that the class of $\{3\1,$ diamond$\}$-free graphs is not tame and by Observation~\ref{obs:trivial}, neither is the class of $\F$-free graphs.

This completes the proof.
\qed\end{proof}

Similarly, we give the proof of Theorem~\ref{thm:main-dual}.
\repeattheorem{theorem:main-dual}

\begin{proof}
Let $\F$ be a family of graphs on at most $4$ vertices such that
$\F\neq \{4\1,C_4\}$ and $\F\neq \{4\1$, claw, $C_4\}$.
If $\F'\trianglelefteq \F$, where $\F'$ is a family of graphs satisfying one of the conditions
$\ref{item1-dual})-\ref{item-last-dual})$, then the class of $\F'$-free graphs is not tame by Corollaries~\ref{cor:h2containedinf} and \ref{cor:wallcontainedinf} and by Proposition~\ref{prop:forestcycle}, respectively, so by Observation~\ref{obs:trivial} it follows that the class of $\F$-free graphs is not tame as well.

Suppose now that for all families $\F'$ in $\ref{item1-dual})-\ref{item-last-dual})$ we have $\F'\not\trianglelefteq \F$. We want to prove that the class of $\F$-free graphs is tame. If there exists some graph $F$ in $\F$ that satisfies $F\subseteq_i 2\2$ or $F\subseteq P_4$, then the class of $\F$-free graphs is contained either in the class of $2\2$-free graphs or in the class of $P_4$-free graphs and thus it is tame by Proposition~16 or Theorem~\ref{thm:cographs}, respectively.

Let $A=\{K_3$, $C_4$, $K_3+\1$, paw, diamond, $K_4\}$ and $B=\{3\1$, $4\1$, $P_2+2\1$, $P_3+\1$, claw$\}$.
Since $\F$ does not contain any induced subgraph of either $2\2$ or $P_4$, we infer that $\F\subseteq A\cup B$. We claim that $A\cap \F\neq \emptyset$ and $B\cap \F\neq \emptyset$. Suppose first that $\F\cap A=\emptyset$. Then $\F\subseteq B$ and we have that $\{3\1\}\trianglelefteq \F$, implying that $\F'\trianglelefteq \F$, where $\F'$ satisfies $\ref{item1-dual})$, a contradiction. Similarly, if $\F\cap B\neq \emptyset$, then $\F\subseteq A$ and we have that $\F'\trianglelefteq \F$, where $\F'$ satisfies $\ref{item-last-dual})$, a contradiction. It follows that $\F\cap A\neq \emptyset$ and $\F\cap B\neq \emptyset$.

Assume first that $\F\cap \{K_3$, $K_3+\1$, paw$\} \neq \emptyset$. Since $\F\cap B\neq \emptyset$, we have that the class of $\F$-free graphs is contained in some of the classes that are proved to be tame in Propositions~\ref{prop:k3+p1free} and \ref{prop:pawfree}, so it follows by Observation~\ref{obs:trivial} that the class of $\F$-free graphs is tame as well.

From now on, we assume that $\F\cap \{K_3$, $K_3+\1$, paw$\} \neq \emptyset$. As $\F\cap A\neq \emptyset$, it follows that $\{C_4$, diamond, $K_4\}\cap \F\neq \emptyset$.
Suppose that $K_4\in \F$. If $(B\setminus\{$claw$\})\cap \F\neq \emptyset$, then the class of $\F$-free graphs is contained in one of the tame classes of graphs considered in the Proposition~\ref{prop:k4free} and thus it is tame by Observation~\ref{obs:trivial}. It follows that $B\cap \F=\{$claw$\}$. Since $\F\subseteq \{$claw, $C_4,$ diamond, $K_4\}$, it follows that $\F'\trianglelefteq F$, where $\F'$ satisfies $\ref{item2-dual})$, a contradiction.

From now on, we assume that $K_4\not\in \F$. Suppose that $C_4\in \F$. If $\{3\1, \2+2\1, P_3+\1\}\cap \F\neq \emptyset$, then the class of $\F$-free graphs is contained either in the class of $\{\2+2\1$, $C_4\}$-free graphs or in the class of $\{P_3+\1$, $C_4\}$-free graphs and by Propositions~\ref{prop:C4p2+2p1}-\ref{prop:C4p3+p1} and Observation~\ref{obs:trivial} it is tame. It follows that $\F\cap B\subseteq \{4\1,$ claw$\}$.  Assume first that $4\1\in \F$. Since $\F\neq \{4\1, C_4\}$, $\F\neq \{4\1$, claw, $C_4\}$, it follows that diamond is in $\F$. Then we have that $\F\trianglelefteq \{4\1,$ claw, diamond$\}$ and by Proposition~\ref{prop:4P1C4diamond} and Observation~\ref{obs:trivial} it follows that the class of $\F$-free graphs is tame. If $4\1\notin \F$, then it follows that claw is in $\F$ and we have that $\F'\trianglelefteq \F$, where $\F'$ satisfies $\ref{item2-dual})$, a contradiction.

From now on, we assume that $C_4\not\in \F$. It follows that
$\F\cap A\subseteq \{$diamond$\}$, that is, $\F\cap A = \{$diamond$\}$. Clearly, $\F\cap B \subseteq \{3\1$, $4\1$, $\2+2\1$, $P_3+\1$, claw$\}$, which implies that
every graph in $\F\cap B$ contains an induced $3\1$.
Consequently, $\{3\1$, diamond$\}\trianglelefteq \F$, a contradiction.

This completes the proof. \qed
\end{proof}

\section{Conclusion}\label{sec:conclusion}

In this work we considered graphs with ``few'' minimal separators. Our main result was an almost complete dichotomy for the property of having
a polynomially bounded number of minimal separators within the family of
graph classes defined by forbidden induced subgraphs with at most four vertices. Two exceptional families for which the problem is still open are
the class of $\{4\1,C_4\}$-free graphs and the class of $\{4\1, \textrm{claw}, C_4\}$-free graphs. Note that the class of $\{4\1,C_4\}$-free graphs and their complements was already of interest to Erd\H{o}s, who offered \$20 to determine whether the vertex set of every $\{4\1,C_4\}$-free graph can be covered by $4$ cliques (which was resolved in the affirmative by Nagy and Szentmikl\'ossy, see~\cite{MR951359}). Moreover, the class of
$\{4\1,C_4\}$-free graphs is one of the only three graph classes defined by a set of four-vertex forbidden induced subgraphs for which the complexity of coloring is still open~\cite{MR3736912}. Some of the results given here (for example Proposition~\ref{prop:Ramsey}) are not restricted to
forbidden induced subgraphs of at most four vertices, and they might prove useful for developing more general dichotomy studies related to minimal separators.

%
%
\bibliographystyle{splncs04}
\bibliography{biblio}

\begin{thebibliography}{10}
\providecommand{\url}[1]{\texttt{#1}}
\providecommand{\urlprefix}{URL }
\providecommand{\doi}[1]{https://doi.org/#1}

\bibitem{MR2473933}
Berry, A., Dahlhaus, E., Heggernes, P., Simonet, G.: Sequential and parallel
  triangulating algorithms for elimination game and new insights on minimum
  degree. Theoret. Comput. Sci.  \textbf{409}(3),  601--616 (2008)

\bibitem{MR1361391}
Bodlaender, H.L., Kloks, T., Kratsch, D.: Treewidth and pathwidth of
  permutation graphs. SIAM J. Discrete Math.  \textbf{8}(4),  606--616 (1995)

\bibitem{MR2583286}
Bodlaender, H.L., Koster, A.M.C.A.: Treewidth computations. {I}. {U}pper
  bounds. Inform. and Comput.  \textbf{208}(3),  259--275 (2010)

\bibitem{MR1978261}
Bodlaender, H.L., Rotics, U.: Computing the treewidth and the minimum fill-in
  with the modular decomposition. Algorithmica  \textbf{36}(4),  375--408
  (2003)

\bibitem{MR1857397}
Bouchitt{\'e}, V., Todinca, I.: Treewidth and minimum fill-in: grouping the
  minimal separators. SIAM J. Comput.  \textbf{31}(1),  212--232 (2001)

\bibitem{MR1896345}
Bouchitt\'{e}, V., Todinca, I.: Listing all potential maximal cliques of a
  graph. Theoret. Comput. Sci.  \textbf{276}(1-2),  17--32 (2002)

\bibitem{MR3515312}
Brandst\"adt, A., Dabrowski, K.K., Huang, S., Paulusma, D.: Bounding the
  clique-width of {$H$}-free split graphs. Discrete Appl. Math.  \textbf{211},
  30--39 (2016)

\bibitem{MR3672793}
Brandst\"adt, A., Dabrowski, K.K., Huang, S., Paulusma, D.: Bounding the
  clique-width of {$H$}-free chordal graphs. J. Graph Theory  \textbf{86}(1),
  42--77 (2017)

\bibitem{MR2237514}
Brandst\"{a}dt, A., Engelfriet, J., Le, H.O., Lozin, V.V.: Clique-width for
  4-vertex forbidden subgraphs. Theory Comput. Syst.  \textbf{39}(4),  561--590
  (2006)

\bibitem{CM2019}
Chiarelli, N., Milani{\v c}, M.: Linear separation of connected dominating sets
  in graphs. Ars Math. Contemp.  (2019), accepted. Preprint available at
  https://arxiv.org/abs/1610.06539.

\bibitem{MR2139768}
Choudum, S.A., Shalu, M.A.: The class of {$\{3K_1,C_4\}$}-free graphs.
  Australas. J. Combin.  \textbf{32},  111--116 (2005)

\bibitem{MR3442572}
Dabrowski, K.K., Paulusma, D.: Classifying the clique-width of {$H$}-free
  bipartite graphs. Discrete Appl. Math.  \textbf{200},  43--51 (2016)

\bibitem{MR1723686}
Deogun, J.S., Kloks, T., Kratsch, D., M\"{u}ller, H.: On the vertex ranking
  problem for trapezoid, circular-arc and other graphs. Discrete Appl. Math.
  \textbf{98}(1-2),  39--63 (1999)

\bibitem{MR3311877}
Fomin, F.V., Todinca, I., Villanger, Y.: Large induced subgraphs via
  triangulations and {CMSO}. SIAM J. Comput.  \textbf{44}(1),  54--87 (2015)

\bibitem{MR2853937}
Fomin, F.V., Villanger, Y.: Finding induced subgraphs via minimal
  triangulations. In: S{TACS} 2010: 27th {I}nternational {S}ymposium on
  {T}heoretical {A}spects of {C}omputer {S}cience, LIPIcs. Leibniz Int. Proc.
  Inform., vol.~5, pp. 383--394. Schloss Dagstuhl. Leibniz-Zent. Inform.,
  Wadern (2010)

\bibitem{MR3736912}
Fraser, D.J., Hamel, A.M., Ho\`ang, C.T., Maffray, F.: A coloring algorithm for
  {$4K_1$}-free line graphs. Discrete Appl. Math.  \textbf{234},  76--85 (2018)

\bibitem{MR3623382}
Golovach, P.A., Johnson, M., Paulusma, D., Song, J.: A survey on the
  computational complexity of coloring graphs with forbidden subgraphs. J.
  Graph Theory  \textbf{84}(4),  331--363 (2017)

\bibitem{MR2063679}
Golumbic, M.C.: Algorithmic {G}raph {T}heory and {P}erfect {G}raphs, Annals of
  Discrete Mathematics, vol.~57. Elsevier Science B.V., Amsterdam, second edn.
  (2004)

\bibitem{MR951359}
Gy\'{a}rf\'{a}s, A.: Problems from the world surrounding perfect graphs. In:
  Proceedings of the {I}nternational {C}onference on {C}ombinatorial {A}nalysis
  and its {A}pplications ({P}okrzywna, 1985). vol.~19, pp. 413--441 (1988)
  (1987)

\bibitem{MR3530629}
Hartinger, T.R., Johnson, M., Milani{\v{c}}, M., Paulusma, D.: The price of
  connectivity for cycle transversals. European J. Combin.  \textbf{58},
  203--224 (2016)

\bibitem{MR2204109}
Heggernes, P.: Minimal triangulations of graphs: a survey. Discrete Math.
  \textbf{306}(3),  297--317 (2006)

\bibitem{MR1047555}
Hell, P., Ne{\v{s}}et{\v r}il, J.: On the complexity of {$H$}-coloring. J.
  Combin. Theory Ser. B  \textbf{48}(1),  92--110 (1990)

\bibitem{MR1288579}
Kloks, T., Kratsch, D.: Finding all minimal separators of a graph. In: S{TACS}
  94 ({C}aen, 1994), Lecture Notes in Comput. Sci., vol.~775, pp. 759--768.
  Springer, Berlin (1994)

\bibitem{MR1632642}
Kloks, T., Kratsch, D., Wong, C.K.: Minimum fill-in on circle and circular-arc
  graphs. J. Algorithms  \textbf{28}(2),  272--289 (1998)

\bibitem{MR1312164}
Kloks, T.: Treewidth, Lecture Notes in Computer Science, vol.~842.
  Springer-Verlag, Berlin (1994)

\bibitem{kloks1996treewidth}
Kloks, T.: Treewidth of circle graphs. International Journal of Foundations of
  Computer Science  \textbf{7}(02),  111--120 (1996)

\bibitem{MR1451636}
Kloks, T., Kratsch, D., Spinrad, J.: On treewidth and minimum fill-in of
  asteroidal triple-free graphs. Theoret. Comput. Sci.  \textbf{175}(2),
  309--335 (1997)

\bibitem{MR1658776}
Kloks, T., M\"{u}ller, H., Wong, C.K.: Vertex ranking of asteroidal triple-free
  graphs. Inform. Process. Lett.  \textbf{68}(4),  201--206 (1998)

\bibitem{MR1905637}
Kr\'a{\v l}, D., Kratochv{\'\i }l, J., Tuza, Z., Woeginger, G.J.: Complexity of
  coloring graphs without forbidden induced subgraphs. In: {G}raph-{T}heoretic
  {C}oncepts in {C}omputer {S}cience ({B}oltenhagen, 2001), Lecture Notes in
  Comput. Sci., vol.~2204, pp. 254--262. Springer, Berlin (2001)

\bibitem{Kratsch}
Kratsch, D.: The structure of graphs and the design of efficient algorithms.
  Habilitation thesis, Friedrich-Schiller-Universit\"at, Jena (1996)

\bibitem{MR3575032}
Lozin, V.V., Malyshev, D.S.: Vertex coloring of graphs with few obstructions.
  Discrete Appl. Math.  \textbf{216}(part 1),  273--280 (2017)

\bibitem{MR3515011}
Malyshev, D.S.: A complexity dichotomy and a new boundary class for the
  dominating set problem. J. Comb. Optim.  \textbf{32}(1),  226--243 (2016)

\bibitem{MR3743965}
McKee, T.A.: Requiring that minimal separators induce complete multipartite
  subgraphs. Discuss. Math. Graph Theory  \textbf{38}(1),  263--273 (2018)

\bibitem{MR2839718}
Mezzini, M.: Fast minimal triangulation algorithm using minimum degree
  criterion. Theoret. Comput. Sci.  \textbf{412}(29),  3775--3787 (2011)

\bibitem{MR3593955}
Montealegre, P., Todinca, I.: On distance-{$d$} independent set and other
  problems in graphs with ``few'' minimal separators. In: Graph-{T}heoretic
  {C}oncepts in {C}omputer {S}cience, Lecture Notes in Comput. Sci., vol.~9941,
  pp. 183--194. Springer, Berlin (2016)

\bibitem{MR2204116}
Nikolopoulos, S.D., Palios, L.: Minimal separators in {$P_4$}-sparse graphs.
  Discrete Math.  \textbf{306}(3),  381--392 (2006)

\bibitem{MR947254}
Olariu, S.: Paw-free graphs. Inform. Process. Lett.  \textbf{28}(1),  53--54
  (1988)

\bibitem{MR1478250}
Parra, A., Scheffler, P.: Characterizations and algorithmic applications of
  chordal graph embeddings. Discrete Appl. Math.  \textbf{79}(1-3),  171--188
  (1997)

\bibitem{MR2971360}
Pedrotti, V., de~Mello, C.P.: Minimal separators in extended {$P_4$}-laden
  graphs. Discrete Appl. Math.  \textbf{160}(18),  2769--2777 (2012)

\bibitem{MR1576401}
Ramsey, F.P.: On a {P}roblem of {F}ormal {L}ogic. Proc. London Math. Soc. (2)
  \textbf{30}(4),  264--286 (1929)

\bibitem{MR0408312}
Rose, D.J., Tarjan, R.E., Lueker, G.S.: Algorithmic aspects of vertex
  elimination on graphs. SIAM J. Comput.  \textbf{5}(2),  266--283 (1976)

\bibitem{MR3712298}
Schweitzer, P.: Towards an isomorphism dichotomy for hereditary graph classes.
  Theory Comput. Syst.  \textbf{61}(4),  1084--1127 (2017)

\bibitem{MR1971502}
Spinrad, J.P.: Efficient {G}raph {R}epresentations, Fields Institute
  Monographs, vol.~19. American Mathematical Society, Providence, RI (2003)

\bibitem{Suchan}
Suchan, K.: Minimal Separators in Intersection Graphs. Master's thesis,
  Akademia G\'orniczo-Hutnicza im.~Stanis{\l}awa Staszica w Krakowie (2003)

\bibitem{MR592113}
Zab{\l}udowski, A.: A method for evaluating network reliability. Bull. Acad.
  Polon. Sci. S\'{e}r. Sci. Tech.  \textbf{27}(7),  647--655 (1979)

\end{thebibliography}
\newpage
\end{document}